\documentclass[a4paper]{amsart}
\usepackage[latin1]{inputenc}
\usepackage[english]{babel}
\usepackage{amssymb}
\usepackage{amsmath}
\usepackage{amsthm}
\usepackage{latexsym}
\usepackage[breaklinks=true,  colorlinks=true,  linkcolor=blue,  citecolor=blue,  urlcolor=blue]{hyperref}
\usepackage{graphicx}
\usepackage{pdfpages}
\newtheorem{definition}{Definition}[section]
\newtheorem{theorem}{Theorem}
\newtheorem{proposition}[definition]{Proposition}

\newtheorem{lemma}[definition]{Lemma}
\newtheorem{corollary}[definition]{Corollary}
\theoremstyle{definition}
\newtheorem{rem}[definition]{Remark}
\renewcommand{\phi}{\varphi}
\DeclareMathOperator{\vol}{vol}
\DeclareMathOperator{\conv}{conv}
\DeclareMathOperator{\dist}{dist}
\DeclareMathOperator{\Id}{Id}
\DeclareMathOperator{\spann}{span}
\newcommand{\skalar}[2]{\left\langle #1, #2 \right\rangle}
\newcommand{\norm}[1]{\left\Vert #1 \right\Vert}
\newcommand{\D}{\mathrm{d}}
\newcommand{\R}{\mathbb R}
\selectlanguage{english}
\begin{document}
\title[Sections of Simplices]{Sections of the regular simplex - Volume formulas and estimates}
\author{Hauke Dirksen}
\address{Department of Mathematics, Kiel University}
\email{hauke.dirksen(at)gmx.de}
\keywords{simplex, extremal section, volume, maximal, non-central, Brascamp-Lieb, irregular simplex, bounds}
\subjclass[2010]{52A20, 52A38, 52A40}
\date{September 21, 2015 (revised March 1, 2016)}
\begin{abstract}
We state a general formula to compute the volume of the intersection of the regular $n$-simplex with some $k$-dimensional subspace. 
It is known that for central hyperplanes the one through the centroid containing $n-1$ vertices gives the maximal volume. We show that, for fixed small distances of a hyperplane to the centroid,  the hyperplane containing $n-1$ vertices is still volume maximizing. The proof also yields a new and short argument for the result on central sections. With the same technique  we give a partial result for the minimal central hyperplane section.
Finally, we obtain a bound for $k$-dimensional sections.
\end{abstract}


\maketitle
\section{Introduction}
Given a convex body $K \subset \R^{n} $ and some subspace $H$, how to compute the volume of the intersection $H\cap K$? How to find the maximal or minimal sections?
These questions have been considered for various convex bodies. A first example is the unit cube intersected with central hyperplanes. Explicit formulas were already found by Laplace. The question of the minimal and the maximal section were answered by D. Hensley \cite{Hensley1979} resp. K. Ball \cite{Ball1986}. Since then several other bodies and modified questions have been considered. We mention a few examples: $\ell_p$-balls \cite{Meyer1988}, \cite{Koldobsky2005}, complex cubes \cite{Oleszkiewicz2000}; also general $k$-dimensional sections \cite{Ball1989} and non-central sections \cite{Moody2013} as well as taking other than Lebesgue measures \cite{Koenig2013d} have been investigated.

Here we are interested in the regular simplex. S. Webb \cite{Webb1996} gave a formula for central hyperplane sections. He also proved that the maximal central section is the one containing $n-1$ vertices and the centroid. The question of the minimal central hyperplane section is not completely solved yet. P. Filliman stated that his methods can be used to prove that the section parallel to a face is minimal \cite{Filliman1992}. But he gave no precise arguments. P. Brzezinski proved a lower bound which differs from the conjectured minimal volume by a factor of approximately $1.27$ \cite{Brzezinski2013}.

In this paper we consider  $k$-dimensional and also non-central sections. We extend S. Webb's formula and show that, for hyperplanes for fixed small distances from the centroid to the hyperplane, the one containing $n-1$ vertices still gives maximal volume. Concerning the minimal section we give a result that supports the conjecture. For dimensions up to $4$ we prove the conjecture. Using Brascamp-Lieb inequality we also give a bound on the volume of $k$-dimensional sections. In the final chapter we consider irregular simplices and construct an interesting example, using an idea by Walkup \cite{Walkup1968}. It has the property that all its faces have smaller volume than some central section. This is not true for the regular simplex and for any simplex in dimensions up to $4$.

We use the following notations. Let \[S:=\bigg\{ x=(x_1,\dots, x_{n+1}) \in \R^{n+1} \mid \sum_{j=1}^{n+1} x_j =1, \ x_j\geq 0\bigg\}.\] $S$ is the embedded regular $n$-Simplex with $n+1$ vertices and side length $\sqrt{2}$. The Euclidean norm is denoted by $\norm{x}$, the standard scalar product by $\skalar{x}{y}$. The distance of two sets is given by $\dist (A,B):=\inf \{\norm{a-b} \mid a\in A, b\in B\}$, especially $\dist (x,A):=\dist (\{x\},A)$.
 If $H$ is a $k$-dimensional (affine) subspace and  $A\subset H$, the $k$-volume of $A$ is the standard induced Lebesgue volume of the subspace, denoted by $\vol_k(A)$. 
For $a \in \R^{n}$ with $\norm{a}=1$ and $t \in \R$, let $H_a^t:= \{x \in \R^{n} \mid \skalar{a}{x}=t\} = H_a + t\cdot a$ be a translated hyperplane, especially $H_a:=H_a^0$. For $\norm{a}\ne 1$ the hyperplane $H_a$ is still well defined.

We denote two special directions. 
\begin{align*}
a_{\text{min}}&:=\left(\sqrt{\frac{n}{n+1}},-\frac{1}{\sqrt{n(n+1)}},\dots,-\frac{1}{\sqrt{n(n+1)}}\right) \\
a_{\text{max}}&:=\left(\frac{1}{\sqrt{2}},0,\dots,0,-\frac{1}{\sqrt{2}}\right) 
\end{align*}
The volume of the corresponding sections can be computed elementary.
\begin{align}
\begin{split}
\vol_{n-1}(H_{a_{\text{min}}}\cap S)&=\frac{\sqrt{n+1}}{(n-1)!}\left(\frac{n}{n+1}\right)^{n-\frac{1}{2}} \gtrsim \frac{\sqrt{n+1}}{(n-1)!}\frac{1}{e} \label{eq:volmin} \\
\vol_{n-1}(H_{a_{\text{max}}}\cap S)&=\frac{\sqrt{n+1}}{(n-1)!}\frac{1}{\sqrt{2}}
\end{split}
\end{align}
The hyperplane $H_{a_{\text{min}}}$ is parallel to one of the faces of the simplex. The hyperplane $H_{a_{\text{max}}}$ contains $n-1$ vertices and the midpoint of the remaining two vertices. Due to the symmetries of the simplex the volume is invariant under permutations of the coordinates of $a$ and multiplying $a$ by $-1$.

Our results are stated in the following theorems.
The general volume formula that we find is
\begin{theorem}\label{thm:general formula}
Let $H$ be a k-dimensional subspace of $\R^{n+1}$ and $a^l$, $l=1, \dots, n+1-k$ some orthonormal basis of $H^{\perp}$.  Then
\begin{align*}
\vol_{k-1}&(H\cap S) \\ = &\frac{\sqrt{n+1-\sum_{l=1}^{n+1-k}(\sum_j a_j^l)^2}}{(k-1)!} \frac{1}{(2\pi)^{n+1-k}} \int \limits_{\R^{n+1-k}} \prod_{j=1}^{n+1} \frac{1}{1+\mathrm{i}\left( \sum_{l=1}^{n+1-k} a_j^l s^l \right)} \D s.
\end{align*}
\end{theorem}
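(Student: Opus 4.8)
The plan is to reduce $\vol_{k-1}(H\cap S)$ to an integral of $e^{-\sum_j r_j}$ over the cone $C$ spanned by $H\cap S$, to evaluate the radial part of that integral by a $\Gamma$-integral, and to recognise the surviving integral over $C$ as the Fourier integral in the statement via the projection--slice principle. Put $m:=n+1-k$, $\mathbf 1:=(1,\dots,1)\in\R^{n+1}$, and let $P_H$ be the orthogonal projection onto $H$. Set $C:=H\cap\R^{n+1}_{\ge 0}=\{r\ge 0:\skalar{a^l}{r}=0,\ l=1,\dots,m\}$. Since $S=\{x\ge 0:\skalar{\mathbf 1}{x}=1\}$, the set $C$ is exactly the cone with apex $0$ over the base $B:=H\cap S$. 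Inside the $k$-dimensional space $H$ the base $B$ lies in the affine hyperplane $\{y\in H:\skalar{P_H\mathbf 1}{y}=1\}$, whose distance to the origin is $h:=1/\norm{P_H\mathbf 1}$. Parametrising $C$ by $(y,t)\mapsto ty$ with $y\in B$, $t\ge 0$, the Jacobian equals $h\,t^{k-1}$: the $t^{k-1}$ comes from rescaling the $(k-1)$-dimensional base, and $h=\skalar{y}{P_H\mathbf 1}/\norm{P_H\mathbf 1}$ is the length of the component of any $y\in B$ orthogonal to $B$ within $H$. As $\sum_j(ty)_j=t$ for $y\in B$, one gets
\[
\int_C e^{-\sum_j r_j}\,\D\vol_k(r)=h\Big(\int_0^\infty t^{k-1}e^{-t}\,\D t\Big)\vol_{k-1}(B)=h\,(k-1)!\,\vol_{k-1}(H\cap S),
\]
so $\vol_{k-1}(H\cap S)=\frac{\norm{P_H\mathbf 1}}{(k-1)!}\int_C e^{-\sum_j r_j}\,\D\vol_k(r)$, and $\norm{P_H\mathbf 1}^2=\norm{\mathbf 1}^2-\sum_{l=1}^m\skalar{a^l}{\mathbf 1}^2=(n+1)-\sum_{l=1}^m(\sum_j a^l_j)^2$ is the radicand of the theorem.

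It then remains to show $\int_C e^{-\sum_j r_j}\,\D\vol_k(r)=\frac{1}{(2\pi)^m}\int_{\R^m}\prod_{j=1}^{n+1}\big(1+\mathrm i\sum_{l=1}^m a^l_j s^l\big)^{-1}\,\D s$. I would introduce $f\colon\R^{n+1}\to\R$ with $f(r)=e^{-\sum_j r_j}$ for $r\ge 0$ and $f(r)=0$ otherwise, so that $\int_C e^{-\sum_j r_j}\,\D\vol_k=\int_H f$, which is finite because $\sum_j r_j=\norm{r}_1\ge\norm{r}$ on $C$. Its Fourier transform factorises, $\widehat f(\xi)=\prod_{j=1}^{n+1}\int_0^\infty e^{-r_j(1+\mathrm i\xi_j)}\,\D r_j=\prod_{j=1}^{n+1}(1+\mathrm i\xi_j)^{-1}$. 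The projection--slice identity — Fourier inversion carried out in the $m$ directions orthogonal to $H$ — gives $\int_H f=(2\pi)^{-m}\int_{H^\perp}\widehat f$, and parametrising $H^\perp$ isometrically by $s\mapsto\sum_l s^l a^l$ (Jacobian $1$ since $(a^l)$ is orthonormal, so that $\xi_j=\sum_l a^l_j s^l$) turns the right-hand side into exactly the asserted integral over $\R^m$. Together with the previous step this proves the theorem.

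The hard part is the rigorous justification of the slice identity — equivalently, of interchanging the $r$- and $s$-integrations after substituting $(1+\mathrm i\xi_j)^{-1}=\int_0^\infty e^{-r_j(1+\mathrm i\xi_j)}\,\D r_j$. The integrand $\prod_j|1+\mathrm i\xi_j|^{-1}=\prod_j(1+\xi_j^2)^{-1/2}$ is bounded by $1$ and decays like $\norm{\xi}^{-(n+1)}$ in generic directions, hence is absolutely integrable over the $m$-plane $H^\perp$ provided $H$ is in sufficiently general position relative to the coordinate subspaces (e.g. whenever no $a^l_j$ vanishes); in the degenerate cases I would regularise by inserting a factor $e^{-\varepsilon\norm{s}^2}$, apply Fubini and dominated convergence for $\varepsilon>0$, and pass to the limit $\varepsilon\downarrow 0$, using that $\vol_{k-1}(H\cap S)$ depends continuously on $H$. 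Everything else is routine: the cone parametrisation, the $\Gamma$-integral, the evaluation of $\norm{P_H\mathbf 1}$, and the factorisation of $\widehat f$. As a sanity check, for $a=a_{\text{max}}$ the product collapses to $(1+s^2/2)^{-1}$ and $\frac1{2\pi}\int_\R(1+s^2/2)^{-1}\,\D s=\tfrac1{\sqrt2}$, reproducing \eqref{eq:volmin}.
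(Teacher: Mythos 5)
Your proof is correct, and its overall architecture coincides with the paper's: reduce $\vol_{k-1}(H\cap S)$ to $\int_{H\cap\R^{n+1}_{\geq 0}}\exp(-\sum_j x_j)\,\D x$ and evaluate that integral by Fourier inversion in the directions of $H^\perp$ (your ``projection--slice'' step is literally the paper's Lemma \ref{lem:integral formula 2}, with your $g$ being the paper's $F$). Where you genuinely diverge is in the first reduction. The paper gets there in three pieces: Lemma \ref{lem:distance to 0} computes the height $\dist(H\cap\tilde S,0)$ by Lagrange multipliers, Lemma \ref{lem:integral formula 1} proves $\vol_k(H\cap\bar S)=\frac{1}{k!}\int_{H\cap\R^{n+1}_{\geq 0}}\exp(-\sum_j x_j)\,\D x$ by an $\epsilon$-thickening and a sandwich of level sets adapted from Meyer--Pajor (modified because $\bar S$ is not symmetric), and then the pyramid formula glues them. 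You replace all of this by a single direct cone parametrisation $(y,t)\mapsto ty$ with Jacobian $h\,t^{k-1}$ and a $\Gamma$-integral, and you obtain the height as $h=1/\norm{P_H\mathbf 1}$, whence $\norm{P_H\mathbf 1}^2=(n+1)-\sum_l(\sum_j a_j^l)^2$ by Pythagoras rather than by a Lagrange computation. This is shorter and arguably cleaner; the paper's thickening argument buys nothing extra here beyond being the standard device for defining lower-dimensional volumes without choosing a parametrisation. One further point in your favour: you flag that $\prod_j|1+\mathrm i\sum_l a_j^l s^l|^{-1}$ need not be integrable over $\R^{n+1-k}$ for degenerate $H$ (e.g.\ $m=1$ with only one nonzero $a_j$, where the putative formula only holds as a principal value and $F$ is discontinuous at $0$), and you propose a regularisation; the paper simply asserts integrability in Lemma \ref{lem:integral formula 2} and only imposes genericity hypotheses later, in Corollary \ref{cor:volume formula simplex}. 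Your caveat (and the hedge ``sufficiently general position'') is the honest way to state the result; only the trivial slip of citing \eqref{eq:volmin} for the $a_{\text{max}}$ value in your sanity check needs fixing.
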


For non-central hyperplane sections we get a result on the maximal volume.
\begin{theorem}\label{thm:maximal non central sections}
Let $0\leq K \leq 1$. For all $a \in \R^{n+1} $ with $ \norm{a}=1$ and $\sum_{j=1}^{n+1} a_j=K$ we have
\begin{equation*}
	\vol_{n-1}(H_a\cap S) \leq \frac{\sqrt{n+1-K^2}}{(n-1)!} \frac{1}{\sqrt{2-K^2}},
\end{equation*}
with equality for $a=\left( \frac{K}{2}+\sqrt{\frac{1}{2}-\frac{K^2}{4}},\frac{K}{2}-\sqrt{\frac{1}{2}-\frac{K^2}{4}},0,\dots , 0\right)$.
\end{theorem}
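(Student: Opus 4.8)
The plan is to read off the right-hand side directly from the general volume formula and then reduce the remaining factor to a sharp bound for the value of a one-dimensional log-concave density at a point.

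First, apply Theorem~\ref{thm:general formula} in codimension one. For $a\in\R^{n+1}$ with $\norm a=1$ and $\sum_j a_j=K$, take $H$ with $H^{\perp}=\spann\{a\}$; the formula becomes
\[
\vol_{n-1}(H_a\cap S)=\frac{\sqrt{n+1-K^{2}}}{(n-1)!}\cdot\frac{1}{2\pi}\int_{\R}\prod_{j=1}^{n+1}\frac{1}{1+\mathrm{i}a_j s}\,\D s .
\]
Since $\tfrac{1}{1+\mathrm{i}a_j s}=\mathbb{E}\,e^{-\mathrm{i}s a_j X_j}$ for a standard exponential variable $X_j$, the integral equals, by Fourier inversion, the value $f_Y(0)$ of the density of $Y:=\sum_{j=1}^{n+1}a_j X_j$ with $X_1,\dots,X_{n+1}$ independent. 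Each $a_jX_j$ has a log-concave density, and independent sums of log-concave random variables are log-concave, so $f_Y$ is log-concave; moreover $\mathbb{E}Y=\sum_j a_j=K$ and $\operatorname{Var}(Y)=\sum_j a_j^{2}=1$. (If all nonzero $a_j$ have the same sign, then $\norm a=1$ and $|K|\le1$ force $a$ to be a coordinate vector with $K=1$, and $H_a\cap S$ is a facet of $S$ of volume $\sqrt n/(n-1)!=\sqrt{n+1-K^{2}}/\bigl((n-1)!\sqrt{2-K^{2}}\bigr)$, matching the claim; otherwise $0$ lies in the interior of the support of $Y$.) Thus it suffices to prove $f_Y(0)\le(2-K^{2})^{-1/2}$.

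This is an instance of the scale-invariant lemma: \emph{if a real random variable $Z$ has a log-concave density $f$ and $2\operatorname{Var}(Z)>(\mathbb{E}Z)^{2}$, then $f(0)^{2}\bigl(2\operatorname{Var}(Z)-(\mathbb{E}Z)^{2}\bigr)\le1$, with equality precisely for the asymmetric Laplace laws.} To prove it, rescale so $f(0)=1$; one must then show $2\,\mathbb{E}[Z^{2}]-3(\mathbb{E}Z)^{2}\le1$ over all log-concave densities with $f(0)=1$. Fix $\mu:=\mathbb{E}Z$. Because $\log f$ is concave and equals $0$ at the origin, the heaviest tails compatible with $f(0)=1$, $\int f=1$ and mean $\mu$ occur when $\log f$ is affine on each of $(-\infty,0]$ and $[0,\infty)$, i.e. for the density $f(x)=\tfrac{1}{b_++b_-}\bigl(e^{-x/b_+}\mathbf 1_{x>0}+e^{x/b_-}\mathbf 1_{x<0}\bigr)$ with $b_\pm\ge0$; hence $\mathbb{E}[Z^{2}]$ is maximal for that density. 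For it, $f(0)=\tfrac{1}{b_++b_-}$, $\mathbb{E}Z=b_+-b_-$, $\operatorname{Var}(Z)=b_+^{2}+b_-^{2}$, so $f(0)^{2}\bigl(2\operatorname{Var}(Z)-(\mathbb{E}Z)^{2}\bigr)=(b_++b_-)^{-2}\bigl(2(b_+^{2}+b_-^{2})-(b_+-b_-)^{2}\bigr)=1$ identically, which proves the lemma.

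Applying the lemma to $Y$ (with $\operatorname{Var}(Y)=1$, $\mathbb{E}Y=K$, $2-K^{2}>0$) gives $f_Y(0)\le(2-K^{2})^{-1/2}$, hence the bound. For equality, the lemma forces $Y$ to be asymmetric Laplace, i.e. a difference of at most two scaled exponentials, which happens only if at most one $a_j$ is positive and at most one negative; then $a$ has at most two nonzero coordinates $a_1\ge0\ge a_2$, and $a_1+a_2=K$, $a_1^{2}+a_2^{2}=1$ give $a=\bigl(\tfrac K2+\sqrt{\tfrac12-\tfrac{K^{2}}{4}},\,\tfrac K2-\sqrt{\tfrac12-\tfrac{K^{2}}{4}},0,\dots,0\bigr)$ up to permutation. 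Specializing to $K=0$ yields $a_1=-a_2=1/\sqrt2$, i.e. $H_a=H_{a_{\text{max}}}$, so the argument also reproves Webb's theorem on maximal central hyperplane sections. The crux is the extremality step inside the lemma — justifying that, among log-concave densities with a prescribed value at $0$ and a prescribed mean, the second moment is largest for the asymmetric Laplace density (an extreme-point/localization argument for log-concave functions) — together with tracking the equality cases closely enough to obtain the stated characterization.
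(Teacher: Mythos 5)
Your reduction is correct and your route is genuinely different from the one in the paper: after specializing Theorem~\ref{thm:general formula} to codimension one, you recognize $\frac{1}{2\pi}\int_{\R}\prod_j(1+\mathrm{i}a_js)^{-1}\,\D s$ as the density at $0$ of $Y=\sum_j a_jX_j$ for independent standard exponentials, note that $f_Y$ is log-concave with mean $K$ and variance $1$, and reduce everything to the scale-invariant inequality $f(0)^2\bigl(2\operatorname{Var}(Z)-(\mathbb{E}Z)^2\bigr)\le 1$ for log-concave densities. Your handling of the degenerate case (all nonzero $a_j$ of one sign forces $a=e_j$ under $0\le K\le 1$) is fine, and the claimed inequality is at least consistent: it holds with equality on the whole two-parameter family of exponentials glued at $0$, and in the symmetric case it is exactly the Hensley-type bound $f(\mu)^2\sigma^2\le \tfrac12$. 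The paper instead evaluates the Fourier integral by residues (Corollary~\ref{cor:volume formula simplex}), reducing to the rational function $F(a)=\sum_{a_j>0}a_j^{-1}\prod_{k\ne j}\frac{a_j}{a_j-a_k}$, and then shows $F$ increases when the negative (and then the positive) coordinates are concentrated into a single one while $\sum_ja_j$ and $\sum_ja_j^2$ are preserved; the only analytic input is Bernoulli's inequality. Your approach, if completed, would buy a statement of independent interest about log-concave densities and a conceptual explanation of why the extremizer has exactly one positive and one negative coordinate; the paper's buys a short, fully elementary proof.

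However, there is a genuine gap at what you yourself call the crux. The assertion that, among log-concave densities with $f(0)=1$, $\int f=1$ and $\int xf=\mu$, the second moment is maximized by the density whose logarithm is affine on each of $(-\infty,0]$ and $[0,\infty)$ is not proved by the ``heaviest tails'' remark. The class of log-concave densities is not convex, so you cannot argue via extreme points of a convex set; the standard localization theorems (Fradelizi--Gu\'edon) describe extremizers of linear functionals under finitely many \emph{integral} constraints, whereas your normalization $f(0)=1$ is a pointwise constraint and does not fit that framework verbatim. A correct argument would have to show, e.g. by a perturbation/degrees-of-freedom analysis, that any extremizer is log-affine with at most two pieces \emph{and} that its kink sits at the origin (a three-piece log-linear density, or a glued exponential with kink at $m\ne0$, must be excluded by computation, not by appeal to tail heaviness), and must also handle the case $|\mu|>1$ where no glued exponential with $f(0)=1$ and mean $\mu$ exists. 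The equality discussion inherits this gap: you use the (unproved) characterization of equality in the lemma, and you additionally assert without argument that $\sum_ja_jX_j$ can be asymmetric Laplace only when at most one $a_j$ is positive and at most one negative (true, and easy via the pole structure of the characteristic function, but it should be said). Until the lemma is actually proved, the proposal does not establish Theorem~\ref{thm:maximal non central sections}.
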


Concerning small central sections we prove
\begin{theorem}\label{thm:small central sections}
(i) For all $n\in \mathbb{N}$ the volume of the section $H_{a_\text{min}}$ is locally minimal, more precisely for all $a\in \R^{n+1}$ with $\norm{a}=1,\sum_{j=1}^{n+1} a_j=0$ and $a_1 \geq 0 \geq a_2,\dots,a_{n+1}$, we have
\[
\vol_{n-1}(H_a\cap S)\geq \vol_{n-1}(H_{a_{\text{min}}} \cap S).
\]
(ii) For dimensions $n=2,3$ and $4$ the volume of the section $H_{a_\text{min}}$ is globally minimal, i.e. for all $a\in \R^{n+1}$ with $\norm{a}=1,\sum_{j=1}^{n+1} a_j=0$ we have
\[
\vol_{n-1}(H_a\cap S)\geq \vol_{n-1}(H_{a_{\text{min}}} \cap S).
\]
\end{theorem}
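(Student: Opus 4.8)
\emph{Proof plan.} I would read the hyperplane case $k=n$ of Theorem~\ref{thm:general formula} probabilistically. When $\norm a=1$ and $\sum_j a_j=0$ the prefactor there equals $\frac{\sqrt{n+1}}{(n-1)!}$ for every admissible $a$, so $\vol_{n-1}(H_a\cap S)=\frac{\sqrt{n+1}}{(n-1)!}\,I(a)$ with
\[
I(a):=\frac{1}{2\pi}\int_{\R}\prod_{j=1}^{n+1}\frac{1}{1+\mathrm i a_j s}\,\D s ,
\]
and only $I$ needs to be minimised. Each factor $\frac{1}{1+\mathrm i a_j s}$ is the characteristic function of $-a_jE_j$ with $E_j$ a standard exponential random variable (a probability distribution on $(-\infty,0]$ if $a_j>0$ and on $[0,\infty)$ if $a_j<0$), the $E_j$ independent; so by Fourier inversion $I(a)=f_W(0)$, the density at $0$ of $W:=-\sum_j a_jE_j$.

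For part~(i) I would use that the sign hypothesis $a_1\ge 0\ge a_2,\dots,a_{n+1}$ forces $a_1>0$ (else $a=0$) and makes $f_W(0)$ explicit: with $P:=\sum_{j\ge 2}|a_j|E_j\ge 0$ one has $W=P-a_1E_1$, so conditioning on $P$,
\[
I(a)=\frac{1}{a_1}\int_0^\infty f_P(t)\,e^{-t/a_1}\,\D t=\frac{1}{a_1}\,\mathbb E\bigl[e^{-P/a_1}\bigr]=\frac{1}{a_1}\prod_{j=2}^{n+1}\frac{1}{1+|a_j|/a_1}=\frac{a_1^{\,n-1}}{\prod_{j=2}^{n+1}(a_1+|a_j|)} .
\]
Setting $t:=a_1$, $b_j:=|a_j|$, the constraints become $t=\sum_{j\ge 2}b_j$ and $t^2+\sum_{j\ge 2}b_j^2=1$; let $r$ be the number of nonzero $b_j$. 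The AM--GM inequality for the $r$ numbers $t+b_j$ gives $\prod_{b_j\neq 0}(t+b_j)\le\bigl(\tfrac{r+1}{r}t\bigr)^r$, and Cauchy--Schwarz $\sum b_j^2\ge t^2/r$ gives $t^2\le r/(r+1)$, whence
\[
I(a)=\frac{t^{\,r-1}}{\prod_{b_j\neq 0}(t+b_j)}\ \ge\ \frac1t\Bigl(\tfrac{r}{r+1}\Bigr)^{r}\ \ge\ \Bigl(\tfrac{r}{r+1}\Bigr)^{r-\frac12}\ \ge\ \Bigl(\tfrac{n}{n+1}\Bigr)^{n-\frac12},
\]
the last step using $r\le n$ and that $m\mapsto(m/(m+1))^{m-1/2}$ is decreasing, an elementary one-variable fact. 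Since by~\eqref{eq:volmin} the right-hand side equals $\frac{(n-1)!}{\sqrt{n+1}}\vol_{n-1}(H_{a_{\text{min}}}\cap S)=I(a_{\text{min}})$, part~(i) follows; equality forces $r=n$ and all $b_j$ equal, i.e.\ $a=a_{\text{min}}$ up to a permutation.

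For part~(ii) I would first use invariance of the volume under permutations of the coordinates and under $a\mapsto -a$ to assume that $a$ has $p$ positive and $q$ negative coordinates with $p\le q$; if $p\le 1$ the hypothesis of (i) applies. Thus $n=2$ leaves nothing to prove, $n=3$ leaves only the pattern $p=q=2$, and $n=4$ leaves only $p=2,\,q=3$ together with the degenerate $p=q=2$ (one vanishing coordinate), which reduces to the $n=3$ case. For such a direction $a=(u_1,\dots,u_p,-x_1,\dots,-x_q)$ with $\sum_i u_i=\sum_j x_j=c$ and $\sum_i u_i^2+\sum_j x_j^2=1$, the same argument gives $I(a)=f_W(0)=\int_0^\infty f_Q(t)\,f_U(t)\,\D t$ with $Q:=\sum_j x_jE_j'\ge 0$ and $U:=\sum_i u_iE_i\ge 0$, and expanding these two densities by partial fractions makes $I(a)$ an explicit elementary function of $(u_i,x_j)$, a rational combination of terms $\tfrac{x_ju_i}{x_j+u_i}$.

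It then remains to minimise this function over the compact set cut out by the two constraints and $u_i,x_j\ge 0$. The minimiser is \emph{not} on the boundary---there some coordinate vanishes and the sign pattern collapses to one handled by (i), where the value is $\ge\vol_{n-1}(H_{a_{\text{min}}}\cap S)$---but at the fully symmetric interior point: for $n=3$ at $a=(\tfrac12,\tfrac12,-\tfrac12,-\tfrac12)$, where $I(a)=\tfrac12>(\tfrac34)^{5/2}$, and likewise for the two $n=4$ patterns, where the symmetric values again exceed $(n/(n+1))^{n-1/2}$. I expect this last minimisation to be the main obstacle: for $n=3$ it is a two-parameter inequality (cleanest after $u_{1,2}=\tfrac c2\pm s$, $x_{1,2}=\tfrac c2\pm r$ with $c^2+2s^2+2r^2=1$, so that $c$ can be eliminated and one checks monotonicity in the two ``spread'' parameters), and for $n=4$ a three-parameter one; in each case the delicate point is to confirm that this symmetric point, and not some other interior critical point, is the global minimiser on the subdomain in question.
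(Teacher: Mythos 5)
Your part~(i) is correct and complete. Reading the integral as the density at $0$ of $-\sum_j a_jE_j$ recovers exactly the paper's one-summand formula $F(a)=\frac{1}{a_1}\prod_{k\ge2}\bigl(1-\tfrac{a_k}{a_1}\bigr)^{-1}$ for the sign pattern with a single positive coordinate, and your AM--GM plus Cauchy--Schwarz chain is the same inequality that drives the paper's Proposition~\ref{lem:lower bound} in the case $P=1$. Your packaging is arguably cleaner: you avoid constructing the balanced comparison vector $\tilde a$ and the intermediate-value argument for the scaling factors $\beta,\gamma$, at the price of the elementary monotonicity of $m\mapsto\bigl(\tfrac{m}{m+1}\bigr)^{m-1/2}$ (which does hold, e.g.\ via $\ln(1+u)>\tfrac{2u}{2+u}$), and you get the equality case for free.

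Part~(ii), however, has a genuine gap. After the symmetry reduction the only nontrivial strata are $p=2$ (with $q=2$ for $n=3$, and $q=3$ for $n=4$), and everything hinges on minimising your explicit rational function of $(u_i,x_j)$ over those strata. You yourself flag this as the ``main obstacle'' and then assert, without argument, that the minimiser is the fully symmetric interior point. That assertion is precisely the delicate step and cannot be passed off as a symmetry heuristic: the paper's Lemma~\ref{lem:minimum small dimension} shows that for $N=5$ negative coordinates the balanced point is \emph{not} the minimiser even within its one-parameter family ($V(\tfrac12)>V(0)$), and the remark following Proposition~\ref{lem:lower bound} records that the ``equal coordinates of each sign'' conjecture is false in general; so some explicit critical-point analysis is unavoidable, and it is not in your sketch. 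The paper makes this tractable in two stages: first Proposition~\ref{lem:lower bound} (the same AGM balancing you rederived for $P=1$, now applied with $P=2$) collapses all negative coordinates to a common value, reducing your two- and three-parameter minimisations to the single-variable function $V(x)$ of \eqref{eq:volume formula V}; then Lemma~\ref{lem:minimum small dimension} locates all zeros of $V'$ on $(0,1)$ for $N=2,3$ and compares values. I would recommend inserting that balancing step before attacking critical points; a direct two- or three-parameter minimisation, including ruling out asymmetric interior critical points, is substantially harder than what your plan acknowledges. (Your numerical checks $\tfrac12>(\tfrac34)^{5/2}$ and the analogous $n=4$ comparison are correct, and your observation that boundary strata are covered by part~(i) or by induction on the sign pattern is the right way to organise the case analysis.)
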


Also for $k$-dimensional sections we find an upper bound.
\begin{theorem}\label{thm:large k dimensional sections}
Let $H$ be a $k$-dimensional subspace of $\R^{n+1}$ that contains the centroid of $S$. Then we have
\begin{equation}\label{eq:k dim bound nonopt}
\vol_{k-1} \left( H\cap S \right) \leq \frac{\sqrt{n+1}}{(k-1)!} \frac{\big(\sqrt{k}\big)^{\frac{k}{n+1}}}{\sqrt{n+1}}.
\end{equation}
If  additionally $\dist \left(H,e_j\right)^2 \leq \frac{n+1-k}{n+2-k}$ for all $j=1,\dots,n+1$, then 
\begin{equation}\label{eq:k dim bound opt}
\vol_{k-1} \left( H\cap S \right) \leq \frac{\sqrt{n+1}}{(k-1)!} \frac{1}{\sqrt{n+2-k}}
\end{equation}
holds. In this case the estimate is sharp.
\end{theorem}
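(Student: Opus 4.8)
The plan is to feed the general volume formula of Theorem \ref{thm:general formula} into the Brascamp--Lieb inequality. Writing $f_j(u) := \tfrac{1}{2\pi}\int_{\R}\tfrac{e^{-\mathrm{i} u t}}{1+\mathrm{i}t}\,\D t = e^{-u}\mathbf 1_{u>0}$ for the one-sided exponential density, the integral in Theorem \ref{thm:general formula} is exactly $\int_{\R^{n+1-k}}\prod_{j=1}^{n+1} f_j(\skalar{b^j}{s})\,\D s$, where $b^j := (a_j^1,\dots,a_j^{n+1-k})\in\R^{n+1-k}$ is the $j$-th ``row'' of the basis matrix of $H^\perp$. First I would record the geometric normalisation: since the $a^l$ are orthonormal, $\sum_{j=1}^{n+1} b^j\otimes b^j = \Id_{n+1-k}$, so the vectors $b^j$ with weights $c_j=1$ form a Brascamp--Lieb datum of the kind for which the best constant is governed by Ball's/Barthe's criterion. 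The Brascamp--Lieb inequality then gives
\[
\int_{\R^{n+1-k}}\prod_{j=1}^{n+1} f_j(\skalar{b^j}{s})\,\D s \;\le\; \mathrm{BL}\cdot \prod_{j=1}^{n+1}\Big(\int_\R f_j\Big)^{\!?}
\]
— but since each $\int_\R f_j = 1$, the right-hand side collapses to the Brascamp--Lieb constant $\mathrm{BL}$ of the datum $(b^j,1)_{j}$, and the task reduces to bounding that constant.

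The second step is to bound $\mathrm{BL}$. I would use the standard fact (Ball, Barthe) that for a datum with $\sum_j b^j\otimes b^j = \Id$ one has $\mathrm{BL} = \prod_j |b^j|^{-|b^j|^2}$ in the rank-one case, equivalently $\log \mathrm{BL} = -\sum_j |b^j|^2 \log |b^j|$. Setting $\beta_j := |b^j|^2 = \dist(H,e_j)^2 \cdot(\text{normalisation})$ — more precisely $\beta_j$ is the squared length of the projection of $e_j$ onto $H^\perp$ — we have $\sum_j \beta_j = n+1-k$ and $0\le \beta_j\le 1$. So $\log\mathrm{BL} = -\tfrac12\sum_j \beta_j\log\beta_j$. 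For the first, unconditional bound \eqref{eq:k dim bound nonopt}, I would maximise $-\sum_j\beta_j\log\beta_j$ over the simplex $\{\beta_j\in[0,1],\ \sum\beta_j = n+1-k\}$; the maximum of this concave function is at $\beta_j \equiv \tfrac{n+1-k}{n+1}$, giving $\mathrm{BL} = \big(\tfrac{n+1}{n+1-k}\big)^{(n+1-k)/2}$, and then a short computation with the prefactor (bounding $\sqrt{n+1-\sum_l(\sum_j a^l_j)^2}\le\sqrt{n+1}$ and simplifying) should produce the stated $\frac{\sqrt{n+1}}{(k-1)!}\frac{(\sqrt k)^{k/(n+1)}}{\sqrt{n+1}}$.

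For the sharp bound \eqref{eq:k dim bound opt} under the hypothesis $\dist(H,e_j)^2\le \tfrac{n+1-k}{n+2-k}$, the point is that the extremisers of Brascamp--Lieb for a rank-one datum are Gaussians, and one checks that with all $f_j$ equal to the exponential density the inequality is saturated precisely when the datum is ``geometric'' in a way compatible with $S$; the constraint on $\dist(H,e_j)^2$ is exactly what guarantees the extremal $k$-plane — the one spanned by $k$ vertices of $S$, whose section is a scaled regular $(k-1)$-simplex of volume $\frac{\sqrt{n+1}}{(k-1)!}\frac{1}{\sqrt{n+2-k}}$ by a direct computation analogous to \eqref{eq:volmin} — actually lies in the admissible region, so that equality in Brascamp--Lieb is attainable and the bound cannot be improved. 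I would verify sharpness by exhibiting this coordinate $k$-plane explicitly and computing both sides. The main obstacle I anticipate is the bookkeeping in step one: matching the density $f_j$, the normalisation constant $\tfrac{1}{(2\pi)^{n+1-k}}$, and the factor $\sqrt{n+1-\sum_l(\sum_j a^l_j)^2}$ against the Brascamp--Lieb constant so that the $\prod|b^j|^{-\beta_j}$ expression lines up cleanly with the claimed closed forms — and, for \eqref{eq:k dim bound opt}, confirming that the hypothesis on $\dist(H,e_j)^2$ is precisely the feasibility condition rather than merely a sufficient one, which is what makes the estimate sharp.
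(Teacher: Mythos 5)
Your overall instinct --- a rank-one Brascamp--Lieb datum built from the coordinate directions, followed by an entropy-type optimization under a trace constraint --- is the right one, but there is a gap at the very first step that propagates through everything else. The integrand in Theorem \ref{thm:general formula} is $\prod_j\bigl(1+\mathrm{i}\skalar{b^j}{s}\bigr)^{-1}$, i.e.\ a product of the \emph{Fourier transforms} $\hat f(\skalar{b^j}{s})$ of the one-sided exponential, not of the density $f$ itself; it is complex-valued and oscillatory, so Brascamp--Lieb (an inequality for nonnegative functions) cannot be applied to it, and the identification with $\int\prod_j f_j(\skalar{b^j}{s})\,\D s$ is simply false. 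The correct move is to undo the Fourier transform, i.e.\ to work with the physical-side integral $\int_{H\cap\R^{n+1}_{\geq 0}}\exp(-\sum_jx_j)\,\D x$ of Lemma \ref{lem:integral formula 1} and apply Brascamp--Lieb \emph{on $H$} with the datum $Pe_j=d_ju_j$, where $d_j=\norm{Pe_j}$, $\sum_jd_j^2\skalar{\,\cdot\,}{u_j}u_j=\Id_H$ and $\sum_jd_j^2=k$: writing $x_j=\skalar{x}{d_ju_j}$ and $e^{-x_j}=(e^{-\skalar{x}{u_j}/d_j})^{d_j^2}$ yields the bound $\prod_jd_j^{d_j^2}$. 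Note that the quantity to optimize is $\prod_jd_j^{d_j^2}$, not a Brascamp--Lieb constant: for this normalized datum the constant is $1$, and the nontrivial content sits in the factors $\int_0^\infty e^{-s/d_j}\,\D s=d_j$.

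This mis-identification then corrupts the optimization. Since $x\mapsto x\ln x$ is convex, $\prod_jd_j^{d_j^2}=\exp\bigl(\tfrac12\sum_jd_j^2\ln d_j^2\bigr)$ is maximized at an \emph{extreme point} of the constraint polytope, not at the balanced centre as in your concave-entropy computation; your candidate value $\bigl(\tfrac{n+1}{n+1-k}\bigr)^{(n+1-k)/2}$ is $\geq 1$ and cannot reduce to the claimed $\sqrt{k}^{\,k/(n+1)}/\sqrt{n+1}\leq 1$. Moreover you never actually use the hypothesis $c\in H$: it is precisely this hypothesis that gives the lower bound $d_j\geq 1/\sqrt{n+1}$ (by Pythagoras, $\norm{Pe_j}^2=\norm{c}^2+\norm{P(e_j-c)}^2$), and the extra distance hypothesis that improves it to $d_j\geq 1/\sqrt{n+2-k}$; without such a lower bound the optimization only yields the trivial bound $1$, attained by a coordinate $k$-plane. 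Finally, the extremal section for \eqref{eq:k dim bound opt} is not the span of $k$ vertices --- that subspace does not contain the centroid and its section is a face of volume $\sqrt{k}/(k-1)!$ --- but rather $\conv\{e_1,\dots,e_{k-1},\frac{1}{n+2-k}\sum_{j=k}^{n+1}e_j\}$, i.e.\ $k-1$ vertices together with the centroid of the opposite face, whose volume $\frac{\sqrt{n+1}}{(k-1)!}\frac{1}{\sqrt{n+2-k}}$ matches the bound and establishes sharpness.
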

We conclude the paper with a new proof of a result on irregular simplices.
\begin{theorem}\label{ex: example deformed simplex}
For odd dimension $n \geq 5$ there exist irregular simplices whose largest hyperplane section is not one of its faces.
\end{theorem}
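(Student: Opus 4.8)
The plan is to realise the required simplices as \emph{joins} of two congruent lower--dimensional regular simplices, which is the geometric idea behind Walkup's example. Fix an integer $m\geq 2$ and put $n=2m+1$, so that $n\geq 5$ is odd, and decompose $\R^{n}=\R^{m}\oplus\R^{m}\oplus\R$. For a small parameter $h>0$ let $\Delta_1$ be a congruent copy of the regular unit $m$-simplex, centred in the affine plane $\R^{m}\oplus\{0\}\oplus\{-h\}$, and let $\Delta_2$ be a second such copy, centred in $\{0\}\oplus\R^{m}\oplus\{+h\}$. Set $\Delta:=\conv(\Delta_1\cup\Delta_2)$. Its $2m+2$ vertices are affinely independent, so $\Delta$ is an $n$-simplex; its centroid is the origin by symmetry; and it is irregular once $h$ is small, since every edge lying inside $\Delta_1$ or $\Delta_2$ has length $1$ while an edge joining a vertex of $\Delta_1$ to one of $\Delta_2$ has squared length $2\rho^{2}+4h^{2}=\tfrac{m}{m+1}+4h^{2}<1$, where $\rho^{2}=\tfrac{m}{2(m+1)}$ is the squared circumradius of the unit regular $m$-simplex.

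First I would establish a volume formula for joins: if $P$ is a $p$-simplex and $Q$ a $q$-simplex whose affine hulls have orthogonal direction spaces and satisfy $\dim\conv(P\cup Q)=p+q+1$, then
\begin{equation*}
\vol_{p+q+1}\bigl(\conv(P\cup Q)\bigr)=\frac{p!\,q!}{(p+q+1)!}\,\vol_{p}(P)\,\vol_{q}(Q)\,\dist\bigl(\operatorname{aff}P,\operatorname{aff}Q\bigr),
\end{equation*}
which one proves by slicing $\conv(P\cup Q)$ with hyperplanes perpendicular to the separating direction --- the slice at parameter $t\in[0,1]$ is the scaled product $(1-t)P\times tQ$ --- and integrating, the $t$-integral being the Beta integral $\int_{0}^{1}(1-t)^{p}t^{q}\,dt=\tfrac{p!\,q!}{(p+q+1)!}$. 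I would then apply this to the facets of $\Delta$: each is obtained by deleting one vertex, hence is the join of a facet of one $\Delta_i$ with the whole of the other, and the deleted vertex lies at distance $\rho/m$ from the affine hull of the opposite facet, so the relevant inter--hull distance is $\sqrt{(\rho/m)^{2}+4h^{2}}$ in every case. With $V_k=\sqrt{k+1}\,/(k!\,2^{k/2})$ denoting the volume of the unit regular $k$-simplex, the formula then shows that \emph{all} $2m+2$ facets of $\Delta$ have the same $(n-1)$-volume
\begin{equation*}
V_F=\frac{(m-1)!\,m!}{(2m)!}\,V_{m-1}V_{m}\,\sqrt{(\rho/m)^{2}+4h^{2}}.
\end{equation*}

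Finally I would compare this with the central section by the hyperplane $H=\{x_{n}=0\}$ orthogonal to the separating direction. A point of $\Delta$ lies in $H$ precisely when it is the midpoint of a segment from $\Delta_1$ to $\Delta_2$, so $H\cap\Delta=\tfrac12\Delta_1+\tfrac12\Delta_2=\tfrac12\Delta_1\times\tfrac12\Delta_2$, because $\Delta_1$ and $\Delta_2$ occupy orthogonal coordinate blocks; hence $\vol_{n-1}(H\cap\Delta)=V_E:=4^{-m}V_m^{2}$, independent of $h$. Inserting the explicit values of $V_m,V_{m-1},\rho$, the ratio $V_E/V_F$ collapses, and as $h\to 0^{+}$,
\begin{equation*}
\frac{V_E}{V_F}\ \longrightarrow\ (m+1)\,\frac{1}{4^{m}}\binom{2m}{m}=(m+1)\prod_{k=1}^{m}\frac{2k-1}{2k}=:\gamma_m,
\end{equation*}
and one checks at once that $\gamma_1=1$ and $\gamma_{m+1}/\gamma_m=\tfrac{(m+2)(2m+1)}{2(m+1)^{2}}>1$, so $\gamma_m>1$ for all $m\geq 2$. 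Therefore, for $n=2m+1\geq5$ and $h>0$ chosen small enough, $V_E>V_F$: the central section $H\cap\Delta$ has strictly larger volume than every facet of $\Delta$, and so the maximal hyperplane section of this irregular simplex is not a face. I expect the only real friction to be the bookkeeping in the join--volume step, namely correctly identifying the distances between affine hulls of the facets; the concluding comparison reduces to the clean inequality $\gamma_m>1$, and the fact that $\gamma_1=1$ --- equality in dimension $n=3$ --- is precisely what forces the statement to exclude the low dimensions.
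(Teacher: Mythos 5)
Your argument is correct, and the example itself is the same one the paper uses --- Walkup's simplex, i.e.\ the regular $(2m+1)$-simplex degenerated so that its two groups of $m+1$ vertices collapse towards the separating central hyperplane --- but your route to the volume comparison is genuinely different and more elementary. The paper realises the family as a linear image $S(\delta)=(\Id+vv^{\ast})S_{\text{reg}}$ and computes both the central section and the facets by pulling everything back to the regular simplex via the transformation formula of Proposition~\ref{thm: deformed simplex} combined with the Fourier-analytic section formula of Theorem~\ref{thm:general formula}; you instead build the simplex directly as a join of two orthogonally placed regular $m$-simplices and compute synthetically: the join-volume formula via slicing and the Beta integral for the facets, and the product identity $H\cap\Delta=\tfrac12\Delta_1\times\tfrac12\Delta_2$ for the central slice. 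The two computations agree, since for $n=2m+1$ one has $\frac{(n+1)(n-1)!}{2((n-1)!!)^2}=(m+1)4^{-m}\binom{2m}{m}=\gamma_m$, and your observation that $\gamma_1=1$ with $\gamma_{m+1}/\gamma_m=\frac{(m+2)(2m+1)}{2(m+1)^2}>1$ gives a cleaner explanation than the paper's of why dimension $3$ must be excluded. What your approach buys is self-containedness and a transparent geometric origin for the factor $4^{-m}\binom{2m}{m}$; what the paper's buys is a general formula (Proposition~\ref{thm: deformed simplex}) applicable to arbitrary irregular simplices. One slip to repair in the write-up: the distance $\dist(\operatorname{aff}\Delta_1,\operatorname{aff}F_2)$ entering the facet volume equals $\sqrt{r^2+4h^2}$ where $r=\rho/m$ is the \emph{inradius} of the unit regular $m$-simplex, namely the distance from the \emph{centroid} of $\Delta_2$ (onto which $\operatorname{aff}\Delta_1$ orthogonally projects) to $\operatorname{aff}F_2$; it is not the distance from the deleted vertex to the opposite facet, which is the height $\rho(m+1)/m$. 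Since the value $\rho/m$ you actually insert is the correct one, the facet formula, the limit $\gamma_m$, and the conclusion all stand.
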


\section{Volume formula}
\subsection{General formula}
Instead of considering the simplex $S$ we first look at \mbox{$\bar S:=\conv \{S,0\}$}, a right-angled $(n+1)$-simplex. This is a proper $(n+1)$-dimensional body in $\R^{n+1}$. The intersection $ H \cap \bar S$ is a pyramid with base $H \cap S$ and with its top in the origin. Knowing the volume of the intersection $\bar H \cap S$ we can find the volume of $H \cap S$ by the formula for the volume of a pyramid. 
We start with the height of the pyramid $H \cap \bar S$ which is given by $\dist (0, H\cap \tilde S)$, where $\tilde S:=\{x \in \R^{n+1}\mid \sum_{j=1}^{n+1}x_j=1\}$.
\begin{figure}[!h]
    \centering
        \includegraphics[width=0.35\textwidth]{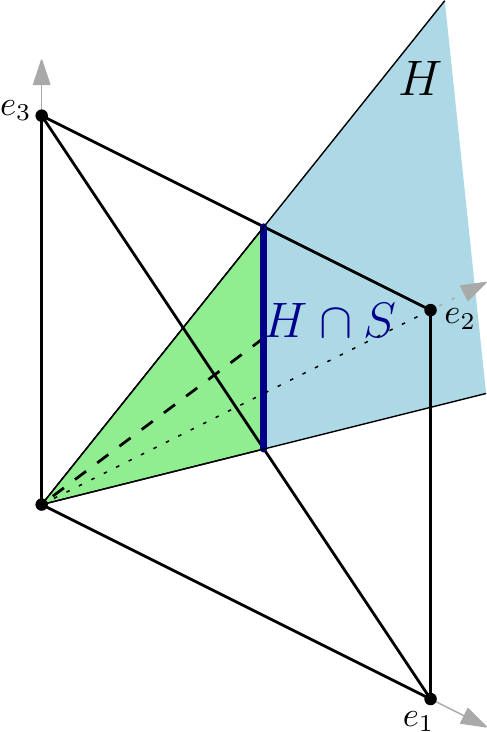}
        \caption{The embedded $2$-simplex}
				\label{fig:embedding}
\end{figure}
\begin{lemma}\label{lem:distance to 0}
Let $H$ be a k-dimensional subspace of $\R^{n+1}$ and $a^l$, $l=1, \dots, n+1-k$ some orthonormal basis of $H^{\perp}$. Then
\begin{equation*}
\dist(H\cap \tilde S,0)=\frac{1}{\sqrt{n+1-\sum_{l=1}^{n+1-k}\left(\sum_{j=1}^{n+1} a_j^l\right)^2}}.
\end{equation*}
\begin{proof}
We minimize $\norm{x}$ under the constraints $\skalar{x}{a^l}=0$ for all $l=1,\dots,n+1-k$, and $\sum_{j=1}^{n+1} x_j=1$. Define the Lagrange function
\[
\Lambda (x,\lambda,\mu):= \sum_{j=1}^{n+1} x_j^2 + \lambda \bigg(\sum_{j=1}^{n+1} x_j -1\bigg) + \sum_{l=1}^{n+1-k}\mu^l \bigg(\sum_{j=1}^{n+1} a^l_j x_j\bigg).
\]
For the derivative with respect to $x_J$ 
we find $\frac{\partial \Lambda}{\partial x_J} = 2x_J + \lambda +\sum_{l=1}^{n+1-k}\mu^l a^l_J$, 
so for a critical vector $x$ we have
\begin{equation}\label{eq:lemma distance 1}
0 = 2x_J + \lambda +\sum_{l=1}^{n+1-k}\mu^l a^l_J.
\end{equation}
Summing (\ref{eq:lemma distance 1}) over $J$ and using $\sum_{j=1}^{n+1} x_j=1$ leads to
\begin{equation}
0=2+ (n+1)\lambda  + \sum_{l=1}^{n+1-k} \mu^l \sum_{J=1}^{n+1}a_J^l.
\end{equation}
Multiplying (\ref{eq:lemma distance 1}) with $x_J$ and then summing over $J$ gives
\begin{equation}
0=2 \sum_{J=1}^{n+1} x_J^2 + \lambda.
\end{equation}
Multiplying (\ref{eq:lemma distance 1}) with $a_J^L$ for a fixed $L$ and then summing over $J$ we find
\begin{equation}
0=\lambda\sum_{J=1}^{n+1} a_J^L + \sum_{l=1}^{n+1-k}\mu^l\sum_{J=1}^{n+1}a_J^la_J^L=\lambda\sum_{J=1}^{n+1}a_J^L  + \mu^L.
\end{equation}
This implies that 
\[
\sum_{J=1}^{n+1} x_J^2 =\frac{1}{n+1-\sum_{L=1}^{n+1-k}\left(\sum_{J=1}^{n+1} a^L_J\right)^2}
\] 
is a necessary condition for an extremum. 
Obviously this critical condition gives  a minimum.
The claim follows taking the square root.
\end{proof}
\end{lemma}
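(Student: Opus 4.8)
The distance in question is $\dist(H\cap\tilde S,0)=\min\{\norm x : x\in H,\ \skalar{x}{\mathbf 1}=1\}$, where $\mathbf 1:=(1,\dots,1)$, since $\tilde S=\{x:\skalar{x}{\mathbf 1}=1\}$. The plan is to avoid Lagrange multipliers entirely and reduce the whole computation to a one-line Cauchy--Schwarz estimate inside the subspace $H$. The observation that makes this work is that, for $x\in H$, the constraint only sees the orthogonal projection of $\mathbf 1$ onto $H$: writing $g:=\mathrm{proj}_H(\mathbf 1)\in H$ we have $\skalar{x}{\mathbf 1}=\skalar{x}{g}$ for every $x\in H$.

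The three steps are then: (1) record this reformulation; (2) apply Cauchy--Schwarz in $H$, so that every admissible $x$ satisfies $1=\skalar{x}{g}\le\norm{x}\,\norm{g}$, hence $\norm x\ge 1/\norm g$, with equality attained at $x^{\ast}:=g/\norm{g}^2$, which lies in $H$ and satisfies $\skalar{x^{\ast}}{\mathbf 1}=1$; this already gives $\dist(H\cap\tilde S,0)=1/\norm{\mathrm{proj}_H(\mathbf 1)}$. (3) Evaluate $\norm{\mathrm{proj}_H(\mathbf 1)}$ from the given orthonormal basis of $H^{\perp}$: by Pythagoras and orthonormality of $a^1,\dots,a^{n+1-k}$,
\[
\norm{\mathrm{proj}_H(\mathbf 1)}^2=\norm{\mathbf 1}^2-\norm{\mathrm{proj}_{H^{\perp}}(\mathbf 1)}^2=(n+1)-\sum_{l=1}^{n+1-k}\skalar{\mathbf 1}{a^l}^2=(n+1)-\sum_{l=1}^{n+1-k}\Big(\sum_{j=1}^{n+1}a^l_j\Big)^{2},
\]
and taking the reciprocal square root yields the claimed formula.

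I do not expect a genuine obstacle here; the only points deserving a word are that the argument is valid exactly when $g=\mathrm{proj}_H(\mathbf 1)\ne 0$, i.e.\ when $\mathbf 1\notin H^{\perp}$ --- which is precisely the standing assumption that $H$ actually meets $\tilde S$, and also exactly the condition that the denominator above does not vanish --- and that existence and uniqueness of the minimiser come for free from the equality case of Cauchy--Schwarz, so no separate second-order or convexity check is needed. (The same identity drops out of the stationarity equations of the Lagrangian $\sum_j x_j^2+\lambda(\sum_j x_j-1)+\sum_{l}\mu^{l}\sum_{j}a^{l}_{j}x_{j}$, which is the more computational route and the one a reader expecting Lagrange multipliers would likely take.)
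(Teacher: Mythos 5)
Your proof is correct, and it takes a genuinely different route from the paper. The paper sets up a Lagrangian for minimizing $\sum_j x_j^2$ subject to $\skalar{x}{a^l}=0$ and $\sum_j x_j=1$, derives three scalar identities by summing the stationarity equations against $1$, $x_J$ and $a_J^L$, solves for $\sum_J x_J^2$, and then asserts without further argument that the critical point is a minimum. You instead observe that for $x\in H$ the constraint $\skalar{x}{\mathbf 1}=1$ only sees $g:=\mathrm{proj}_H(\mathbf 1)$, apply Cauchy--Schwarz inside $H$ to get $\norm{x}\geq 1/\norm{g}$ with equality at $x^\ast=g/\norm{g}^2$, and compute $\norm{g}^2=(n+1)-\sum_l\bigl(\sum_j a_j^l\bigr)^2$ by Pythagoras using the orthonormality of the $a^l$. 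Your version buys three things: it avoids the multiplier bookkeeping entirely; it supplies the existence, uniqueness and minimality of the extremizer for free via the equality case of Cauchy--Schwarz, which is exactly the step the paper waves through with ``obviously''; and it makes transparent the degenerate case $g=0$ (equivalently $\mathbf 1\in H^\perp$, equivalently a vanishing denominator), which is precisely when $H\cap\tilde S=\emptyset$ and the formula is vacuous. The paper's route is more mechanical but generalizes more readily to constraints that are not inner products with a fixed vector; for this particular statement your argument is the cleaner one.
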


We compute the volume of  $\bar H \cap S$ following an approach by M. Meyer and A. Pajor. They derived a formula for sections of the $\ell_1$-ball \cite{Meyer1988}. The $(n+1)$-simplex $\bar S$ corresponds to one orthant of the $\ell_1$-ball. The formulas look similar. Note that the $\ell_1$-ball is origin-symmetric, whereas the simplex $\bar S$ is not. Meyer's and Pajor's proof has to be modified slightly since their lemma requires symmetry of the body.
\begin{lemma}\label{lem:integral formula 1}
Let $H$ be a $k-$dimensional subspace of $\R^{n+1}$. Then
\begin{equation*}\label{eq:lem integral formula 1}
\vol_k (H\cap \bar S)= \frac{1}{k!} \int_{H\cap \R^{n+1}_{\geq 0}} \exp\bigg(-\sum_{j=1}^{n+1} x_j \bigg) \D x.
\end{equation*}
\begin{proof}
Let $H$ be defined by some orthonormal basis of $H^{\perp}$, say $a^1,\dots,a^{n+1-k}$. 
Let $K\subset H$. For $\epsilon \geq 0$ we define 
\begin{equation}\label{eq:thicken}
K(\epsilon):=\Big\{ x+\sum_{l=1}^{n+1-k} t^la^l \mid x\in K, \ t^l\in [-\epsilon,\epsilon] \Big\}.
\end{equation}
The set $K(\epsilon)$ and therefore the computations below depend on the choice of the orthonormal basis. But finally we consider limits for $\epsilon \to 0$. These are independent of the choice of $a^1,\dots, a^{n+1-k}$. 

The $k$-volume of $K$ is given by $\lim_{\epsilon\to 0}\frac{1}{(2\epsilon)^{n+1-k}}\vol_{n+1}\left(K(\epsilon)\right)$. 
For $c>0$ we have 
\begin{equation}\label{eq:good subset}
\vol_{n+1}(K(c\epsilon))=c^{n+1-k}\vol_{n+1}(K(\epsilon)).
\end{equation}

We consider the following integral with respect to the Lebesgue measure on $\R^{n+1}$: 
\begin{align*}
g(\epsilon) &:= \frac{1}{(2\epsilon)^{n+1-k}} \int_{H(\epsilon)\cap \R^{n+1}_{\geq 0}} \exp \Big(-\sum_{j=1}^{n+1}x_j \Big) \D x. 
\end{align*}
Then by integration on the level sets of $\sum_{j=1}^{n+1} x_j$ resp. by Fubini's theorem we find
\begin{align*}
						g(\epsilon)
						&=\frac{1}{(2\epsilon)^{n+1-k}} \int_{H(\epsilon)\cap \R^{n+1}_{\geq 0}} \int_{\sum_{j=1}^{n+1}x_j}^{\infty} \exp\left(-t\right) \D t \ \D x\\
						&= \frac{1}{(2\epsilon)^{n+1-k}} \int_{0}^{\infty} \vol_n \bigg(\Big\{x \in \R^{n+1}_{\geq 0} \mid x \in H(\epsilon), \sum_{j=1}^{n+1}x_j=s\Big\}\bigg) \int\limits_{s}^{\infty} \exp\left(-t\right) \D t \ \D s\\
           &= \frac{1}{(2\epsilon)^{n+1-k}} \int_{0}^{\infty}  \vol_{n+1} \bigg(\Big\{x \in \R^{n+1}_{\geq 0} \mid x \in H(\epsilon), \sum_{j=1}^{n+1}x_j\leq t \Big\} \bigg)\exp\left(-t\right) \D t .
\end{align*}
Furthermore we have
 \begin{align*}
\Big\{x \in \R^{n+1}_{\geq 0} \mid x \in H(\epsilon), \sum_{j=1}^{n+1}x_j\leq t \Big\} 
=\Big\{t\cdot x \in \R^{n+1}_{\geq 0} \mid x \in H\left(\frac{\epsilon}{t}\right), \sum_{j=1}^{n+1}x_j\leq 1 \Big\}.
	\end{align*}
	By homogeneity of $\vol_{n+1}$ we get
	\begin{align*}
&g(\epsilon) \\
&=\frac{1}{(2\epsilon)^{n+1-k}} \int_{0}^{\infty}  t^{n+1} \vol_{n+1} \bigg(\Big\{x \in \R^{n+1}_{\geq 0} \mid x \in H\left(\frac{\epsilon}{t}\right), \sum_{j=1}^{n+1}x_j\leq 1 \Big\} \bigg)\exp\left(-t\right) \D t .
\end{align*}
Since $\left\{x \in \R^{n+1}_{\geq 0} \mid x \in H\left(\frac{\epsilon}{t}\right), \sum_{j=1}^{n+1}x_j\leq 1 \right\} 
\subset
\left\{x \in \R^{n+1} \mid x \in (H\cap \bar S)\left(\frac{\epsilon}{t}\right)\right\}$ and by (\ref{eq:good subset}) we have
\begin{align*}
g(\epsilon) &\leq \frac{1}{(2\epsilon)^{n+1-k}} \int_{0}^{\infty}  t^k \vol_{n+1} \left(\left\{x \in \R^{n+1} \mid x \in (H\cap \bar S)\left(\epsilon \right)\right\} \right)\exp\left(-t\right) \D t \\
&= \frac{1}{(2\epsilon)^{n+1-k}} \vol_{n+1} \left(\left\{x \in \R^{n+1}_{\geq 0} \mid x \in (H\cap \bar S)\left(\epsilon \right)\right\} \right) \ \Gamma\left(1+k\right)
\end{align*}
For $\epsilon \to 0$ this tends to $\Gamma(1+k) \vol_k(H\cap \bar S)$.

For $\delta \geq 0$ let $M_{\delta}$ be the subset of $H\cap \bar S$ such that $M_{\delta}(\delta) \subset \bar S$.
Since the following inclusion holds
\begin{align*}
M_{\frac{\epsilon}{t}} \left(\frac{\epsilon}{t}\right)
\subset
\Big\{x \in \R^{n+1}_{\geq 0} \mid x \in H\left(\frac{\epsilon}{t}\right), \sum_{j=1}^{n+1}x_j\leq 1 \Big\}, 
\end{align*}
we get by (\ref{eq:good subset}):
\begin{align*}
g(\epsilon) &\geq \frac{1}{(2\epsilon)^{n+1-k}} \int_{0}^{\infty}  t^k \vol_{n+1} \left( M_{\frac{\epsilon}{t}} \left(\epsilon \right) \right)\exp\left(-t\right) \D t.
\end{align*}
Note that $\lim_{\epsilon \to 0} \frac{1}{(2\epsilon)^{n+1-k}} \vol_{n+1}\left( M_{\frac{\epsilon}{t}} \left(\epsilon \right)\right)=\vol_k (M_0)=\vol_k (H\cap \bar S)$. So for $\epsilon \to 0$ we have 
\[
\frac{1}{(2\epsilon)^{n+1-k}} \int_{0}^{\infty}  t^k \vol_{n+1} \left( M_{\frac{\epsilon}{t}} \left(\epsilon \right) \right)\exp\left(-t\right) \D t \ \ \longrightarrow \ \ \Gamma(1+k)\vol_k(H\cap \bar S),
\]
 since we may interchange the limit and the integral due to dominated convergence.

Therefore \[\lim_{\epsilon \to 0} g(\epsilon)= \Gamma(1+k)\vol_k(H\cap \bar S).\qedhere \]
\end{proof}
\end{lemma}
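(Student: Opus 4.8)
The plan is to avoid the $\epsilon$-thickening device altogether and instead exploit that $C:=H\cap\R^{n+1}_{\geq 0}$ is a convex cone, together with the homogeneity of $k$-dimensional Lebesgue measure. Since $H$ is a subspace and $\R^{n+1}_{\geq 0}$ is an orthant, both are invariant under multiplication by nonnegative scalars, so $C$ is a $k$-dimensional cone with apex $0$. On $C$ the linear functional $f(x):=\sum_{j=1}^{n+1}x_j=\skalar{x}{\mathbf 1}$ is nonnegative, and $f(x)=0$ forces $x=0$ because all coordinates are $\geq 0$.

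First I would rewrite the integrand by the layer-cake identity $\exp(-f(x))=\int_{f(x)}^{\infty}\exp(-t)\,\D t$ and apply Tonelli (the integrand is nonnegative) to interchange the order of integration:
\[
\int_{C}\exp(-f(x))\,\D x=\int_{0}^{\infty}\exp(-t)\,\vol_k\big(\{x\in C\mid f(x)\leq t\}\big)\,\D t.
\]
The key geometric observation is that the sublevel set is a dilate of one fixed body: since $\bar S=\{x\in\R^{n+1}_{\geq 0}\mid \sum_j x_j\leq 1\}$, the level-$t$ set $\{x\in\R^{n+1}_{\geq 0}\mid \sum_j x_j\leq t\}$ equals $t\bar S$, and intersecting with the cone $H$ gives $\{x\in C\mid f(x)\leq t\}=H\cap t\bar S=t\,(H\cap\bar S)$. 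By homogeneity of the $k$-dimensional measure on $H$ this sublevel set has $k$-volume $t^{k}\vol_k(H\cap\bar S)$.

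Substituting and evaluating the resulting Gamma integral $\int_0^{\infty}t^{k}\exp(-t)\,\D t=\Gamma(k+1)=k!$ yields
\[
\int_{C}\exp(-f(x))\,\D x=k!\,\vol_k(H\cap\bar S),
\]
which is exactly the claim after dividing by $k!$. Two features of this route are worth recording: it never uses any symmetry of the body, so the obstruction that forced Meyer and Pajor to assume origin-symmetry simply does not arise for the orthant-piece $\bar S$; and the whole argument stays within the intrinsic geometry of $H$ rather than passing through the ambient $(n+1)$-dimensional measure.

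The step I expect to require the most care is the rigorous handling of the $k$-dimensional Lebesgue measure living on the subspace $H$ — in particular justifying both the Tonelli interchange and the scaling homogeneity $\vol_k(t\,A)=t^{k}\vol_k(A)$ as honest statements about the \emph{induced} measure rather than the ambient one. The clean fix is to fix once and for all a linear isometry $\Phi\colon\R^{k}\to H$ and pull the entire computation back to $\R^{k}$, where the layer-cake identity, Tonelli, and the dilation formula for Lebesgue measure are completely standard; the cone identity $\{x\in C\mid f(x)\leq t\}=t\,(H\cap\bar S)$ transfers verbatim to the preimage cone $\Phi^{-1}(C)$ because $\Phi$ is linear. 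One should also note at the outset that $H\cap\bar S$ is a compact convex polytope section, hence measurable with finite $k$-volume, so every integral above converges.
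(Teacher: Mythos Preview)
Your argument is correct and takes a genuinely different, more streamlined route than the paper. The paper follows Meyer--Pajor by thickening $H$ to an $(n{+}1)$-dimensional slab $H(\epsilon)$, rewriting the ambient integral via layer-cake/Fubini, rescaling, and then sandwiching the resulting expression between an upper bound (using $(H\cap\bar S)(\epsilon/t)$) and a lower bound (using inner approximants $M_{\epsilon/t}$), with dominated convergence to pass to the limit $\epsilon\to 0$. You bypass all of this by observing that $C=H\cap\R^{n+1}_{\geq 0}$ is itself a cone inside the $k$-dimensional space $H$, so the sublevel sets $\{x\in C:\sum_j x_j\le t\}$ are \emph{exact} dilates $t\,(H\cap\bar S)$ rather than approximate ones; the layer-cake identity combined with the Gamma integral then gives the result in one line, entirely within the intrinsic $k$-dimensional measure. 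What your approach buys is brevity and the complete disappearance of the $\epsilon$-limit and the two-sided squeeze; what the paper's approach buys is a closer parallel to the established $\ell_1$-ball computation and an explicit demonstration of how the thickening device survives without origin symmetry. Your remark about pulling back along an isometry $\Phi\colon\R^k\to H$ is exactly the right way to make the Tonelli and scaling steps airtight, and nothing further is needed.
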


To get an explicit formula it is classical to use the Fourier transformation and the Fourier Inversion Theorem, e.g. \cite{Ball1986}, \cite{Oleszkiewicz2000}.
\begin{lemma}\label{lem:integral formula 2}
Let $H$ be a k-dimensional subspace of $\R^{n+1}$ and $a^l$, $l=1, \dots, n+1-k$ some orthonormal basis of $H^{\perp}$. Then
\begin{equation*}
 \int \limits_{H\cap \R^{n+1}_{\geq 0}} \exp \bigg(-\sum_{j=1}^{n+1} x_j\bigg) \D x = \frac{1}{(2\pi)^{n+1-k}} \int \limits_{\R^{n+1-k}} \prod_{j=1}^{n+1} \frac{1}{1 + \mathrm{i} \left( \sum_{l=1}^{n+1-k} a_j^l s^l\right)} \D s.
\end{equation*}
\begin{proof}
First rewrite the integral
\[
\int \limits_{H\cap \R^{n+1}_{\geq 0}} \exp\bigg( - \sum_{j=1}^{n+1} x_j\bigg) \D x = \int_{\forall l,j: \ \skalar{x}{a^l}=0, \ x_j\geq 0} \exp\bigg(-\sum_{j=1}^{n+1} x_j\bigg)\D x
\]
and define a map $F\colon \R^{n+1-k} \to \R$ by
\[
t=(t^1, \dots, t^{n+1-k}) \mapsto \int_{\forall l,j: \ \skalar{x}{a^l}=t^l, \ x_j\geq 0} \exp\bigg(-\sum_{j=1}^{n+1} x_j\bigg) \D x.
\]
This function is integrable, since $\int_{\R^{n+1-k}} F(t) \D t=\int_{\R^{n+1}_{\geq 0}}\exp \left(-\sum_{j=1}^{n+1} x_j\right)\D x=1$.
We apply the Fourier transform:
\begin{align*}
&
\left(2\pi\right)^{\frac{n+1-k}{2}} \hat{F}(\tau) \\
	&= 
		\int_{\R^{n+1-k}} F(t) \exp\bigg(-\mathrm{i} \skalar{t}{\tau}\bigg) \D t\\
	&= 
		\int_{\R^{n+1-k}}\left( \int_{\forall l,j: \ \skalar{x}{a^l}=t^l, \ x_j\geq 0} \exp\bigg(-\sum_{j=1}^{n+1} x_j\bigg) \ \D x\right) \exp\bigg(-\mathrm{i} \sum_{l=1}^{n+1-k} \skalar{x}{a^l}\tau ^ l\bigg) \D t\\
		&= 
	\int_{\R^{n+1}_{\geq 0}} \exp\bigg( -  \sum_{j=1}^{n+1} x_j\bigg) \exp \bigg( - \mathrm{i} \sum_ {j=1}^{n+1} \bigg(\sum_{l=1}^{n+1-k} a_j^l \tau^l\bigg) x_j \bigg) \D x\\
	&= 
	\int_{\R^{n+1}_{\geq 0}} \exp \bigg( - \sum_{j=1}^{n+1} \bigg( 1 + \mathrm{i} \sum_{l=1}^{n+1-k} a_j^l \tau^l  \bigg) x_j \bigg) \D x \\
	&= 
	\prod_{j=1}^{n+1}\frac{1}{1 + \mathrm{i} \left( \sum_{l=1}^{n+1-k} a_j^l \tau^l \right) }.
\end{align*}
This function is continuous and  integrable.
By the Fourier inversion formula we get
\begin{align*}
F(0) 
&= \frac{1}{(2\pi)^{(n+1-k)/2}} \int_{\R^{n+1-k}} \hat{F}(\tau)\D \tau  \\
&= \frac{1}{(2\pi)^{n+1-k} }\int_{\R^{n+1-k}} \prod_{j=1}^{n+1} \frac{1}{1 + \mathrm{i} \left( \sum_{l=1}^{n+1-k} a_j^l s^l\right)} \D s.\qedhere
\end{align*}
\end{proof}
\end{lemma}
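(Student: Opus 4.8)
The plan is to recognize the left-hand side as the value at the origin of a function on $H^{\perp}\cong\R^{n+1-k}$ obtained by slicing the positive orthant, and then to recover that value by Fourier inversion, since the product on the right is exactly a constant multiple of the Fourier transform of that function. Using the orthonormal basis $a^1,\dots,a^{n+1-k}$ to coordinatize $H^\perp$, I would set
\[
F(t):=\int_{\{x\,:\,\skalar{x}{a^l}=t^l\ \forall l,\ x_j\geq 0\}}\exp\Big(-\sum_{j=1}^{n+1}x_j\Big)\,\D x,
\]
the integral of the exponential weight over the affine $k$-dimensional slice of $\R^{n+1}_{\geq 0}$ on which the $n+1-k$ functionals $\skalar{\cdot}{a^l}$ take the values $t^l$. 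With this definition $F(0)$ is precisely the left-hand side.

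First I would record integrability of $F$: slicing the orthant by these parallel affine subspaces and integrating over all $t$ reassembles the full integral, so $\int_{\R^{n+1-k}}F(t)\,\D t=\int_{\R^{n+1}_{\geq 0}}e^{-\sum x_j}\,\D x=1$; thus $F\in L^1$ and $\hat F$ is well defined and continuous. Next I would compute $\hat F$ in the symmetric convention $(2\pi)^{(n+1-k)/2}\hat F(\tau)=\int_{\R^{n+1-k}}F(t)e^{-\mathrm{i}\skalar{t}{\tau}}\,\D t$. On the slice one has $\skalar{t}{\tau}=\sum_l\skalar{x}{a^l}\tau^l$, so unfolding the slice integrals against the $t$-integral collapses everything into a single integral over the orthant, $\int_{\R^{n+1}_{\geq 0}}\exp\big(-\sum_j(1+\mathrm{i}\beta_j)x_j\big)\,\D x$ with $\beta_j:=\sum_l a_j^l\tau^l$. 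This factorizes over $j$, and each factor is the elementary Laplace integral $\int_0^\infty e^{-(1+\mathrm{i}\beta_j)x_j}\,\D x_j=\frac{1}{1+\mathrm{i}\beta_j}$, convergent because the real part of the coefficient is $1>0$; this produces exactly the product on the right.

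Finally I would invert: as $F\in L^1$ is continuous at $0$, Fourier inversion gives $F(0)=(2\pi)^{-(n+1-k)/2}\int\hat F(\tau)\,\D\tau$, and substituting the product yields the claimed identity with prefactor $(2\pi)^{-(n+1-k)}$. The hard part will be justifying inversion, which requires $\hat F\in L^1(\R^{n+1-k})$. Since $|1+\mathrm{i}\beta_j|^{-1}=(1+\beta_j^2)^{-1/2}$, along directions of $\tau$ where many forms $\beta_j$ are nonzero the product of $n+1$ factors decays like $|\tau|^{-(n+1)}$, comfortably beating the threshold $|\tau|^{-(n+1-k)}$ needed in dimension $n+1-k$ (note $n+1>n+1-k$ as $k\geq 1$). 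The delicate directions are those in which several forms vanish at once; for subspaces in general position the active count never falls below the integrability threshold, and the degenerate cases where $H$ meets a coordinate hyperplane can be recovered by a continuity argument. Making this decay estimate precise on a cone decomposition of $\R^{n+1-k}$ is the one genuinely technical point, and it simultaneously legitimizes the interchange of integrations used to compute $\hat F$.
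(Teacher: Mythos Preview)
Your approach is essentially identical to the paper's: the same slice function $F$, the same Fourier-transform computation via unfolding and factorizing the Laplace integral, and the same appeal to Fourier inversion at $0$. The only difference is that the paper simply asserts ``This function is continuous and integrable'' before inverting, whereas you correctly flag the integrability of $\hat F$ as the one nontrivial point and sketch a decay argument; in that respect your write-up is more scrupulous than the original.
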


Now we put together Lemmas \ref{lem:distance to 0}, \ref{lem:integral formula 1} and \ref{lem:integral formula 2} and use the formula for the $k$-volume of a pyramid
\[
\vol_k\left(H\cap \bar S\right)=\frac {1}{k} \dist(H\cap \tilde S,0) \vol_{k-1}(H\cap S)
\]
to obtain the formula from Theorem 1.

\subsection{Formula for Hyperplane sections}\label{sec:hyperplane sections}
For hyperplane sections this formula can be evaluated further by using the residue theorem. S. Webb did this for central sections and this method also works for general sections. Note that the restriction on $a$ is not an essential one, since the function $a\mapsto \vol_{n-1}(H_a\cap S)$ is continuous.
\begin{corollary}\label{cor:volume formula simplex}
Let $a  \in \R^{n+1}$ with $\norm{a}=1$ such that all $a_j>0$ are pairwise distinct and there is at least one $a_j>0$ and one $a_j <0$. Then we have
\begin{equation*}
			\vol_{n-1}(H_{a} \cap S)
	=	\frac{\sqrt{n+1-(\sum_{j=1}^{n+1} a_j)^2}}{(n-1)!} \sum_{a_j>0} \frac{1}{a_j}\prod_{k\ne j}\frac{a_j}{a_j-a_k}.
\end{equation*}
\end{corollary}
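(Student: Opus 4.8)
The plan is to specialize Theorem~\ref{thm:general formula} to hyperplane sections, i.e.\ $k=n$, and then evaluate the resulting one-dimensional integral by the residue theorem; this is the route S.~Webb used for central sections, and it goes through verbatim for general $a$. With $n+1-k=1$ the orthogonal complement $H^{\perp}$ is the line $\R a$ with $a=a^1$, and Theorem~\ref{thm:general formula} reads
\[
\vol_{n-1}(H_a\cap S)=\frac{\sqrt{n+1-\bigl(\sum_{j=1}^{n+1}a_j\bigr)^2}}{(n-1)!}\cdot\frac{1}{2\pi}\int_{\R}\prod_{j=1}^{n+1}\frac{1}{1+\mathrm{i}a_js}\,\D s,
\]
so the whole task is to show $I:=\frac{1}{2\pi}\int_{\R}\prod_{j=1}^{n+1}\frac{1}{1+\mathrm{i}a_js}\,\D s=\sum_{a_j>0}\frac{1}{a_j}\prod_{k\ne j}\frac{a_j}{a_j-a_k}$.

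First I would pass to the meromorphic extension of the integrand in the variable $s\in\mathbb{C}$. Factoring $1+\mathrm{i}a_js=\mathrm{i}a_j\bigl(s-\mathrm{i}/a_j\bigr)$ for every $a_j\ne0$ (a factor with $a_j=0$ is the constant $1$ and is irrelevant), the poles are the points $s=\mathrm{i}/a_j$, which lie in the open upper half-plane precisely when $a_j>0$ and in the open lower half-plane when $a_j<0$; in particular no pole is real. Since the hypothesis guarantees at least one positive and one negative $a_j$, the product has at least two nonconstant factors, so the integrand is $O(|s|^{-2})$ as $|s|\to\infty$; this makes the integral absolutely convergent and, on closing the contour by a large semicircle in the upper half-plane, makes the arc contribution vanish in the limit. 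The assumption that the positive $a_j$ are pairwise distinct (and automatically distinct from $0$ and from the negative entries) is exactly what guarantees that the poles $\mathrm{i}/a_j$ with $a_j>0$ are all simple.

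Then the residue theorem gives $I=\mathrm{i}\sum_{a_j>0}\operatorname{Res}_{s=\mathrm{i}/a_j}\prod_{k}\frac{1}{1+\mathrm{i}a_ks}$, and evaluating each simple residue,
\[
\operatorname{Res}_{s=\mathrm{i}/a_j}\prod_{k}\frac{1}{1+\mathrm{i}a_ks}=\frac{1}{\mathrm{i}a_j}\prod_{k\ne j}\frac{1}{1-a_k/a_j}=\frac{1}{\mathrm{i}a_j}\prod_{k\ne j}\frac{a_j}{a_j-a_k},
\]
so that the factors of $\mathrm{i}$ cancel and $I=\sum_{a_j>0}\frac{1}{a_j}\prod_{k\ne j}\frac{a_j}{a_j-a_k}$, which together with the prefactor above is the asserted formula.

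The computation is essentially routine; the only genuine points to check are the decay estimate that kills the semicircular arc --- which is where the sign hypothesis on $a$ is used --- and the matching of the distinctness hypothesis with the simplicity of the relevant poles, after which the elementary residue formula applies directly. As the remark before the statement notes, the constraints on $a$ cost nothing: both sides are continuous in $a$ on the unit sphere, the admissible $a$ are dense among those with some $a_j>0$ and some $a_j<0$, and the general formula of Theorem~\ref{thm:general formula} holds for every $a$ anyway.
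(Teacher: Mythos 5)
Your proof is correct and takes exactly the route the paper intends: specializing Theorem~\ref{thm:general formula} to $k=n$ and evaluating the resulting one-dimensional integral by the residue theorem, closing the contour in the upper half-plane so that only the simple poles $\mathrm{i}/a_j$ with $a_j>0$ contribute. The paper itself only sketches this step (deferring to Webb's computation for central sections), so your argument merely fills in the details it omits, and the decay estimate and pole-simplicity checks you flag are precisely where the hypotheses on $a$ enter.
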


S. Webb's formula required $\sum_{j=1}^{n+1} a_j =0$. In this case the centroid of $S$ lies in the hyperplane. 
A different way of representing non-central sections is translating central sections. 
So for suitable $a \in \R^{n+1}$, $t\in \R$ with $\sum_{j=1}^{n+1} a_j = 0$  and 
$b \in \R^{n+1}$ 
we have 
\[H_a^t\cap S=H_{b}\cap S. \]
The two representations can be converted into each other:

Given $a=(a_1,\dots,a_{n+1})$ with $\norm{a}=1$, $\sum_{j=1}^{n+1} a_j=0$ and $t\in \R$ set \[b_j:=\frac{a_j-t}{\sqrt{1+(n+1)t^2}}\] for $j=1,\dots,n+1$. 
Then $b$ has norm $1$. 
Let $x\in S$. Then $\skalar{x}{a}=t$ is equivalent to $\sqrt{1+(n+1)t^2} \skalar{x}{b}= \sum_{j=1}^{n+1} (a_j x_j - tx_j) =\skalar{a}{x}-t\sum_{j=1}^{n+1} x_j=0$. So \[H_a^t \cap S = H_{b}\cap S.\]

On the other hand, given $b=(b_1,\dots,b_{n+1})$ with $\norm{b}=1$ set 
\begin{align}
\begin{split}
a_j&=\sqrt{\frac{n+1}{n+1- (\sum_{j=1}^{n+1} b_j)^2}} \cdot{} b_j - \frac{\sum_{j=1}^{n+1} b_j}{\sqrt{(n+1)(n+1-\left(\sum_{j=1}^{n+1} b_j)^2\right)}} \quad \text{ and } \\ 
t&=-\frac{\sum_{j=1}^{n+1} b_j}{\sqrt{(n+1)\left(n+1-(\sum_{j=1}^{n+1} b_j)^2\right)}}\label{eq:hyperplane convert2}
\end{split}
\end{align}
for $j=1,\dots,n+1$. Then $a$ has norm 1 and $\sum_{j=1}^{n+1} a_j=0$. For $x\in S$ we have $\skalar{x}{a}-t=\sum_{j=1}^{n+1}x_j(a_j-t)=\sqrt{\frac{n+1}{n+1- (\sum_{j=1}^{n+1} b_j)^2}} \sum_{j=1}^{n+1}x_jb_j$. Therefore the condition $ \skalar{x}{b}=0$ is equivalent to $\skalar{x}{a}=t$. So we have \[H_{b}\cap S=H_a^t \cap S.\]
Due to (\ref{eq:hyperplane convert2}) we also know
\begin{equation}\label{eq:dist c hcaps}
\dist (c, H_b\cap S)=\frac{ \left|\sum_{j=1}^{n+1} b_j\right|}{\sqrt{(n+1)\left(n+1-(\sum_{j=1}^{n+1} b_j)^2\right)}}.
\end{equation}
\section{Estimates for hyperplane sections}
We investigate the volume of a hyperplane section with fixed distance to the centroid. For a normal vector $a$  set $K:=\sum_{j=1}^{n+1} a_j$.  Due to (\ref{eq:dist c hcaps}), fixed distance means $K$ is fixed. We may assume $K \geq 0$, since $a$ and $-a$ determine the same hyperplane. By continuity of $a\mapsto \vol_{n-1}(H_a\cap S)$ we may assume that all $a_j\ne 0$. Furthermore we assume that $a=(a_1,\dots,a_P,a_{P+1},\dots,a_{n+1})$ with $a_j > 0$ for $j=1,\dots,P$ and $a_j <0$ for  $j=P+1, \dots, n+1$. This does not change the volume, since permutations of the coordinates correspond to the symmetries of the simplex.
According to Corollary \ref{cor:volume formula simplex} we have to estimate
\[
F(a):=\sum_{j=1}^P \frac{1}{a_j}\prod_{k=1,k\ne j}^{n+1} \frac{1}{1-\frac{a_k}{a_j}}
\]
as long as all positive $a_j$ are pairwise distinct. 
The estimates rely on the following inequalities, valid for $N\in\mathbb{N}$ and $x_1,\dots,x_N > 0$. These inequalities are Bernoulli's inequality resp. of the Arithmetic-Geometric-Mean inequality:
For all $N\in \mathbb{N}$ and $x_1,\dots,x_N > 0$ we have
\begin{equation}\label{eq:real inequality}
1+\sum_{j=1}^N x_j \ \leq \prod_{j=1}^N (1+x_j)  \leq \ \bigg( 1+\frac{\sum_{j=1}^N x_j}{N} \bigg)^N.
\end{equation}
The idea for our estimates is to modify a given vector $a$ such that the sum of the coordinates, their square sum and their signs do not change. Preserving the sign geometrically means the polytopal structure of the section is preserved (see subsection \ref{sec:small sections} below). Preserving the sum of the coordinates means preserving the distance of the hyperplane to the centroid.
\subsection{Maximal non-central sections}  
\begin{proposition}\label{lem:upper bound}
Let $0\leq K \leq 1$, $a \in \R^{n+1} $ with $\norm{a}=1$ and $\sum_{j=1}^{n+1} a_j=K$. Then we have
\begin{equation*}
	F(a) \leq \frac{1}{\sqrt{2-K^2}},
\end{equation*}
with equality for $a=(\frac{K}{2}+\sqrt{\frac{1}{2}-\frac{K^2}{4}},0,\dots,0,\frac{K}{2}-\sqrt{\frac{1}{2}-\frac{K^2}{4}},0,\dots,0)$.
\begin{proof}
For a given $a$ we define
\[
\tilde a:=\bigg(\gamma a_1, \dots, \gamma a_P, \beta \sum_{j> p} a_j, 0, \dots\bigg),
\]
with $\gamma \geq 0$ and $\beta \geq 0$ such that $\norm {\tilde a}=1$ and $\sum_{j=1}^{n+1}  \tilde a_j=\sum_{j=1}^{n+1} a_j$.
Then we have
\begin{align}
\gamma \sum_{j\leq P} a_j  + \beta \sum_{j>P} a_j &=\sum_{j=1}^{n+1} a_j \label{eq: rescale 1}, \\
\gamma^2 \sum_{j\leq P} a_j^2 + \beta ^2 \bigg(\sum_{j>P}a_j\bigg)^2 &=1\label{eq: rescale 2}.
\end{align}
We show that $\beta$ and $\gamma$ in $[0,1]$ with these properties exist:\\
Equation (\ref{eq: rescale 1}) describes a line $\beta_g(\gamma)$ in the $(\gamma,\beta)$-plane with positive slope, and values $\beta_g(0)=\frac{\sum_{j=1}^{n+1}a_j}{\sum_{j>P}a_j}\leq 0$ and $\beta_g(1)=1$, since $\sum_{j=1}^{n+1} a_j\geq 0$.
Equation (\ref{eq: rescale 2}) describes an ellipse. Solving this for the positive part of the ellipse, we get
$ \beta_e(0)=\sqrt{\frac {1}{(\sum_{j>P}a_j)^2}}>0$ and $\beta_e(1)=\sqrt{\frac{1-\sum_{j\leq p} a_j^2}{(\sum_{j>p}a_j)^2}}= \sqrt{\frac{\sum_{j> P} a_j^2}{(\sum_{j>p}a_j)^2}}\leq 1$. 
Therefore $\beta_g (0) \leq 0 \leq \beta_e(0)$ and $\beta_g(1)=1 \geq \beta_e(1)$. The two functions $\beta_g(\cdot)$ and $\beta_e(\cdot)$ are continuous on $\R_{\geq 0}$. Due to the intermediate value theorem they intersect, so  there are \mbox{$\beta, \gamma \in [0,1]$} with the desired properties.

We compare $F(\tilde a)$ and $F(a)$ and find
\begin{align*}
F(\tilde a) &=  \sum_{j=1}^P \frac{1}{\gamma a_j} \left( \prod_{k=1,k\ne j}^P \frac{1}{1-\frac{\gamma a_k}{\gamma a_j}}\right) \ \frac{1}{1-\frac{\beta \sum_{l >P} a_l}{\gamma a_j}}  \\		
		        & =  \sum_{j=1}^P \frac{1} {a_j} \left( \prod_{k=1,k\ne j}^P \frac{1}{1-\frac{ a_k}{ a_j}} \right)
																			\ \frac{1}{\gamma-\frac{\beta \sum_{l>P} a_l}{a_j}}  \\
						&\geq
						\sum_{j=1}^P \frac{1} {a_j} \left( \prod_{k=1,k\ne j}^P \frac{1}{1-\frac{ a_k}{ a_j}} \right)
																			\  \prod_{l>P} \frac{1}{1 - \frac{a_l}{a_j}}\\
						&= F(a).
\end{align*}
The last inequality is due to the left-hand side of (\ref{eq:real inequality}), i.e. Bernoulli's inequality, and the fact that $\beta,\gamma \in [0,1]$. More precisely, for all $j=1 \dots P$, we have
\[
\gamma-\frac{\beta \sum_{l>P} a_l}{a_j} \leq 1 + \sum_{l>P} \frac{|a_l|}{a_j} \leq \prod_{l>P} \left(1 + \frac{|a_l|}{a_j}\right)=\prod_{l>P}\left( 1 - \frac{a_l}{a_j}\right).
\]

We do the same trick for $j=P+1, \dots, n+1$. So consider a vector of the form $a=(a_1,\dots,a_P,0,\dots,0,a_{n+1})$, with $a_{j}>0$ for $j=1,\dots,P$ and $a_{n+1}<0$. We define 
\[
\tilde a := \bigg(\gamma \sum_{j=1}^{P}a_j,0,\dots,0,\beta a_{n+1} \bigg)
\]
such that $\norm{\tilde a}=1$ and $\sum_{j=1}^{n+1} \tilde a_j=\sum_{j=1}^{n+1} a_j$.  
With a similar argument as in the first step, additionally using $\sum_{j=1}^{n+1}a_j \leq 1$, we find that $\beta, \gamma \in [0,1]$ exist.

We compare $F(-a)$ and $F(-\tilde a)$. Note that the function $F$ now only has one summand. The estimates are analog to the first step.
So $F$ attains its maximum at a vector of the form $a=(a_1,0, \dots ,0, a_{P+1},0,\dots,0)$. Using $\norm{a}=1$ and $\sum_{j=1}^{n+1} a_j=K$, we conclude that
\[
a_1=\frac{K}{2}+\sqrt{\frac{1}{2}-\frac{K^2}{4}} , \quad \ \ a_{P+1}=\frac{K}{2}-\sqrt{\frac{1}{2}-\frac{K^2}{4}}
\]
and $F(a)=\frac{1}{\sqrt{2-K^2}}$.
\end{proof}
\end{proposition}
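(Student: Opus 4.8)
The plan is to bound $F(a)$ by successively replacing the normal vector $a$ with vectors that have fewer nonzero coordinates but the same value of $\norm{a}$ and of $\sum_j a_j$, arranging that each replacement does not decrease $F$; after two such steps one is left with a vector supported on exactly two coordinates, for which $F$ can be computed directly. Since the right-hand side of the formula in Corollary \ref{cor:volume formula simplex} is invariant under permuting coordinates and under $a\mapsto -a$, I would first reduce to the case $a_1,\dots,a_P>0>a_{P+1},\dots,a_{n+1}$ and, using continuity of $a\mapsto\vol_{n-1}(H_a\cap S)$, to the generic case where the positive $a_j$ are pairwise distinct, so that $F(a)=\sum_{j=1}^P\frac1{a_j}\prod_{k\ne j}\frac1{1-a_k/a_j}$ is available.

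The first step collapses all negative coordinates into a single one. I would put $\tilde a:=(\gamma a_1,\dots,\gamma a_P,\ \beta\sum_{j>P}a_j,\ 0,\dots,0)$ and look for $\gamma,\beta\ge 0$ with $\norm{\tilde a}=1$ and $\sum_j\tilde a_j=K$. The second condition is a line and the first (the positive branch of) an ellipse in the $(\gamma,\beta)$-plane; comparing their values at $\gamma=0$ and $\gamma=1$ — which is exactly where the hypothesis $K=\sum_j a_j\ge 0$ is used — shows by the intermediate value theorem that they meet at a point with $\gamma,\beta\in[0,1]$. Expanding $F(\tilde a)$, the factors $(1-a_k/a_j)^{-1}$ with $k\le P$ are unchanged, one pulls a $1/\gamma$ out of each summand, and it remains to compare $\big(\gamma-\beta\sum_{l>P}a_l/a_j\big)^{-1}$ with $\prod_{l>P}(1-a_l/a_j)^{-1}$. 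Because $\gamma,\beta\le 1$ and $a_l<0$ for $l>P$, Bernoulli's inequality (the left half of (\ref{eq:real inequality})) gives $\gamma-\beta\sum_{l>P}a_l/a_j\le 1+\sum_{l>P}|a_l|/a_j\le\prod_{l>P}(1+|a_l|/a_j)=\prod_{l>P}(1-a_l/a_j)$, hence $F(\tilde a)\ge F(a)$.

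The second step is symmetric: starting from a vector of the form $a=(a_1,\dots,a_P,0,\dots,0,a_{n+1})$, I would apply the same construction to $-a$ (whose formula now has a single summand), collapsing the positive coordinates into one, so that the vector becomes supported on exactly one positive and one negative coordinate. The corresponding existence claim for $\gamma,\beta\in[0,1]$ is proved by the same kind of intermediate-value argument, this time additionally invoking $K=\sum_j a_j\le 1$, and the same Bernoulli estimate again shows that $F$ does not decrease. Finally, for $a=(a_1,0,\dots,0,a_{P+1},0,\dots,0)$ the constraints $a_1+a_{P+1}=K$ and $a_1^2+a_{P+1}^2=1$ force $a_1=\frac K2+\sqrt{\frac12-\frac{K^2}4}$, $a_{P+1}=\frac K2-\sqrt{\frac12-\frac{K^2}4}$, and then $F(a)=\frac1{a_1}\cdot\frac1{1-a_{P+1}/a_1}=\frac1{a_1-a_{P+1}}=\frac1{\sqrt{2-K^2}}$, which is simultaneously the asserted bound and the announced equality case.

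The part I expect to take the most care is the two existence statements for $\gamma,\beta\in[0,1]$: one has to examine the boundary values of the line and of the relevant arc of the ellipse in the right order, and this is precisely where the two inequalities $0\le K$ and $K\le 1$ enter (one on each side). Once these are in place, the monotonicity of $F$ under each modification is a clean application of Bernoulli's inequality, and the genericity hypothesis that the positive $a_j$ be distinct is a purely technical point, removed at the end by continuity of $a\mapsto\vol_{n-1}(H_a\cap S)$.
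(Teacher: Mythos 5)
Your proposal is correct and follows essentially the same route as the paper: the same two-stage collapse (first the negative coordinates via $\tilde a=(\gamma a_1,\dots,\gamma a_P,\beta\sum_{j>P}a_j,0,\dots)$, then the positive ones by applying the construction to $-a$), the same intermediate-value argument locating $\gamma,\beta\in[0,1]$ with $0\le K$ and $K\le 1$ entering exactly where you say, and the same Bernoulli estimate for the monotonicity of $F$. The concluding computation $F(a)=1/(a_1-a_{P+1})=1/\sqrt{2-K^2}$ also matches the paper.
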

Using Proposition \ref{lem:upper bound} we immediately determine the maximal section for a fixed distance close to the centroid. Close means precisely that the distance between the hyperplane and the centroid is at most the distance from a face to the centroid. This maximal section contains $n-1$ of the vertices of the simplex.

\begin{rem}This includes the previous result of S. Webb. If we are only interested in central section, i.e. $K=0$, we do not have to introduce $\beta$ and $\gamma$ and the proof of the lemma is just by Bernoulli's inequality. Therefore we gave a different and shorter proof for Webb's result on maximal central sections.\end{rem}
\subsection{Small central sections}
A similar Lemma can be obtained for an estimate in the converse direction. Instead of concentrating the coordinates we balance them. This estimate is weaker since we can only balance the negative half of the coordinates of $a$, whereas we could concentrate the positive and the negative coordinates in the above lemma. 
\begin{proposition}\label{lem:lower bound}
Let $0\leq K$, $a \in \R^{n+1} $ with $\norm{a}=1$ and $\sum_{j=1}^{n+1} a_j=K$. Then we have
\begin{equation*}
	F(a) \geq F(\tilde a)
\end{equation*}
where $\tilde a=\big(\gamma a_1,\dots,\gamma a_P, \beta \frac{\sum_{j=P+1}^{n+1} a_j}{N}, \dots, \beta \frac{\sum_{j=P+1}^{n+1}a_j}{N}\big)$, with $\gamma,\beta$ such that $\norm{\tilde a}=1$, $\sum_{j=1}^{n+1} \tilde a_j=K$ and $N:=n+1-P$.
\begin{proof}
As in the last lemma we modify the vector $a$ to obtain an estimate for $F$. This time we choose $\beta,\gamma \geq 1$ such that for
\[
\tilde a=\bigg(\gamma a_1,\dots,\gamma a_P, \beta \frac{\sum_{j=P+1}^{n+1} a_j}{N}, \dots, \beta \frac{\sum_{j=P+1}^{N}a_j}{N}\bigg)
\]
the equations
\begin{align}
\gamma \sum_{j\leq P} a_j  + \beta \sum_{j>P} a_j &=\sum_{j=1}^{n+1} a_j \label{eq: rescale 3},\\
\gamma^2 \sum_{j\leq P} a_j^2 + \beta ^2 \frac{\left(\sum_{j>P}a_j\right)^2}{N} &=1\label{eq: rescale 4}
\end{align}
hold. 
Equation (\ref{eq: rescale 3}) is the same as (\ref{eq: rescale 1}) from the previous proof with the same implications $\beta_g(0)\leq 0$ and $\beta_g(1)=1$. Note that the slope of $\beta_g(\cdot)$ is larger than $1$.

Equation (\ref{eq: rescale 4}) again defines an ellipse. With $\beta_e (\gamma)=\sqrt{N}\sqrt{\frac{1-\gamma^2\sum_{j\leq P}a_j^2}{\left(\sum_{j>P}a_j\right)^2}}$ we find that $\beta_e(1)=\sqrt{N}\sqrt{\frac{\sum_{j>p}a_j^2}{\left(\sum_{j>P}a_j\right)^2}}\geq 1$. Furthermore $\beta_e(\gamma)=0$ for $\gamma= \frac{1}{\sqrt{\sum_{j\leq P} a_j^2}}\geq 1$.

By the intermediate value theorem, there are $\beta\geq 1$ and $\gamma \geq 1$ with (\ref{eq: rescale 3}) and (\ref{eq: rescale 4}). Since the slope of the line defined by (\ref{eq: rescale 3}) is greater than $1$, also $\beta \geq \gamma$. 

We compare $F(a)$ and $F(\tilde a)$ and obtain
\begin{align*}
F(\tilde a)&= \sum_{j=1}^P \frac {1}{\gamma a_j}\prod_{k=1,k\ne j}^P\frac {1}{1-\frac{a_k}{a_j}} \prod_{k=P+1}^{n+1}\frac {1}{1-\frac{\beta \sum_{l>P}a_l}{\gamma N a_j}}\\
&\leq \sum_{j=1}^P \frac {1}{a_j}\prod_{k=1,k\ne j}^P\frac {1}{1-\frac{a_k}{a_j}}\Bigg(\frac {1}{1-\frac{\beta \sum_{l>P}a_l}{\gamma N a_j}}\Bigg)^N.
\end{align*}
Now we use $\frac{\beta}{\gamma} \geq 1$ and the right inequality of (\ref{eq:real inequality}), which is the AGM inequality:
\[
\left(1-\frac{\beta \sum_{l>P}a_l}{\gamma N a_j}\right)^N = 
\left(1+\frac{\beta}{\gamma}\frac{\sum_{l>P}|a_l|}{N a_j}\right)^N \geq
\prod_{l>P} \left(1+\frac{|a_l|}{a_j}\right)=
\prod_{l>P}\left (1-\frac{a_l}{a_j}\right)
\]
Finally, we find that
\begin{equation*}
F(\tilde a) \leq \sum_{j=1}^P \frac {1}{a_j} \prod_{k=1, k\ne j}^P\frac {1}{1-\frac{a_k}{a_j}} \prod_{l=P+1}^{n+1}\frac {1}{1-\frac{a_l}{a_j}} = F(a). \qedhere
\end{equation*}
\end{proof}
\end{proposition}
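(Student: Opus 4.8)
The plan is to run the perturbation argument of Proposition~\ref{lem:upper bound} in the opposite direction: instead of concentrating the coordinates of $a$, we balance the negative block. As in the discussion preceding the statement we may assume $a_1,\dots,a_P>0>a_{P+1},\dots,a_{n+1}$, that the positive coordinates are pairwise distinct, that $\norm{a}=1$ and $\sum_j a_j=K\ge 0$. Put $N:=n+1-P$ and $\overline a:=\frac1N\sum_{l>P}a_l<0$, and look for $\gamma,\beta>0$ such that
\[
\tilde a=\bigl(\gamma a_1,\dots,\gamma a_P,\ \beta\overline a,\dots,\beta\overline a\bigr)
\]
(with $N$ equal last coordinates) satisfies $\norm{\tilde a}=1$ and $\sum_j\tilde a_j=K$; equivalently $\gamma\sum_{j\le P}a_j+\beta\sum_{l>P}a_l=K$ and $\gamma^2\sum_{j\le P}a_j^2+\beta^2\frac{(\sum_{l>P}a_l)^2}{N}=1$.

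First I would check that such $\gamma,\beta$ exist with $\beta\ge\gamma\ge 1$. Solving the linear equation for $\beta$ gives a line through $(\gamma,\beta)=(1,1)$ of slope $-\sum_{j\le P}a_j/\sum_{l>P}a_l$, which is $\ge 1$ because $K\ge 0$ forces $\sum_{j\le P}a_j\ge-\sum_{l>P}a_l>0$. Solving the quadratic equation for $\beta\ge 0$ gives the upper arc $\beta_e(\gamma)=\sqrt N\,\sqrt{1-\gamma^2\sum_{j\le P}a_j^2}\,\big/\,\bigl|\sum_{l>P}a_l\bigr|$ of an ellipse, with $\beta_e(1)\ge 1$ by Cauchy--Schwarz applied to $a_{P+1},\dots,a_{n+1}$, and $\beta_e(\gamma_0)=0$ at $\gamma_0=1/\sqrt{\sum_{j\le P}a_j^2}\ge 1$. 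The line is on or below the arc at $\gamma=1$ and strictly above it at $\gamma=\gamma_0$, so the intermediate value theorem produces an intersection $(\gamma,\beta)$ with $\gamma\in[1,\gamma_0]$; since the line has slope $\ge 1$ through $(1,1)$, at that point $\beta\ge\gamma\ge 1$.

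Then I would compare $F(\tilde a)$ and $F(a)$ through the formula of Corollary~\ref{cor:volume formula simplex}. The $N$ equal negative coordinates of $\tilde a$ contribute $N$ identical factors, and the dilation of the positive block by $\gamma$ cancels in the quotients, so
\[
F(\tilde a)=\sum_{j=1}^{P}\frac1{\gamma a_j}\prod_{\substack{k\le P\\k\ne j}}\frac1{1-\frac{a_k}{a_j}}\,\biggl(\frac1{1-\frac{\beta}{\gamma}\frac{\sum_{l>P}a_l}{Na_j}}\biggr)^{\!N}.
\]
The sum obtained by erasing the factor $1/\gamma$ equals $F(a_1,\dots,a_P,c,\dots,c)$, where $c:=\frac{\beta}{\gamma}\overline a<0$ is repeated $N$ times; by Corollary~\ref{cor:volume formula simplex} together with the homogeneity $F(\lambda b)=\lambda^{-1}F(b)$ this is a positive multiple of a section volume, hence positive, so replacing $1$ by $1/\gamma\le 1$ can only decrease it and $F(\tilde a)\le F(a_1,\dots,a_P,c,\dots,c)$. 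Finally, writing $x_l:=|a_l|/a_j>0$, the inequality $\beta/\gamma\ge 1$ and the right half (the AGM inequality) of (\ref{eq:real inequality}) give
\[
\Bigl(1+\tfrac{\beta}{\gamma}\cdot\tfrac1N\sum_{l>P}x_l\Bigr)^{\!N}\ \ge\ \Bigl(1+\tfrac1N\sum_{l>P}x_l\Bigr)^{\!N}\ \ge\ \prod_{l>P}(1+x_l),
\]
i.e. $\bigl(1-\tfrac{\beta}{\gamma}\tfrac{\sum_{l>P}a_l}{Na_j}\bigr)^{-N}\le\prod_{l>P}(1-\tfrac{a_l}{a_j})^{-1}$ for every $j\le P$; substituting this turns $F(a_1,\dots,a_P,c,\dots,c)$ into $F(a)$ and proves the proposition.

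The step I expect to be the main obstacle is the last one. The weights $\frac1{a_j}\prod_{k\le P,\,k\ne j}(1-a_k/a_j)^{-1}$ attached to the summation indices are \emph{signed} — the weight at the $m$-th largest positive coordinate has sign $(-1)^{m-1}$ — so the per-$j$ estimates above cannot simply be summed. The way I would handle this is to read each manipulation not as a termwise bound but as an inequality between $F$ evaluated at two vectors sharing the same positive block (using repeatedly the homogeneity of $F$ and its strict positivity on admissible vectors, from Corollary~\ref{cor:volume formula simplex}), and to verify that replacing the negative block by its rescaled average really moves $F$ in the asserted direction. Getting this bookkeeping exactly right is the only delicate point; the existence of $\gamma,\beta$, the algebraic expansion, and the choice of $\overline a$ all parallel Proposition~\ref{lem:upper bound}.
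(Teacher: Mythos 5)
Your proposal reproduces the paper's argument essentially step for step: the existence of $\beta,\gamma$ with $\beta\ge\gamma\ge 1$ via the line/ellipse picture and the intermediate value theorem, the explicit expression for $F(\tilde a)$, the removal of the factor $\tfrac{1}{\gamma}$, and the per-$j$ AGM inequality $\bigl(1+\tfrac{\beta}{\gamma}\tfrac{1}{N}\sum_{l>P}|a_l|/a_j\bigr)^N\ge\prod_{l>P}(1+|a_l|/a_j)$. Up to the reduction to the inequality $F(a_1,\dots,a_P,c,\dots,c)\le F(a)$ with $c=\tfrac{\beta}{\gamma}\overline{a}$, everything you write is correct and matches the paper; your justification of the $1/\gamma$ step by factoring it out of the whole (nonnegative) sum is in fact cleaner than the paper's, which displays that inequality without comment.

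The gap is exactly where you say it is, and you do not close it. The weights $\frac{1}{a_j}\prod_{k\le P,\,k\ne j}(1-a_k/a_j)^{-1}$ do alternate in sign, and this is not a merely formal worry: for $a=(0.7,\,0.1,\,-0.1,\,-0.7)$ in $\R^4$ one has $F(a)=\tfrac{70}{96}-\tfrac{10}{96}=0.625$, while after balancing the negative block one finds $\gamma=\beta=1/\sqrt{0.82}$ and the two summands of $F(a_1,a_2,c,c)$ are approximately $0.675$ and $-0.067$; the positively weighted term decreases ($0.675\le 0.729$) but the negatively weighted term \emph{increases} ($-0.067\not\le -0.104$), so the per-$j$ estimate genuinely fails for $j=2$ even though the summed inequality $0.608\le 0.625$ happens to hold. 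Your proposed remedy --- to ``read each manipulation as an inequality between $F$ evaluated at two vectors sharing the same positive block'' and ``verify that replacing the negative block by its rescaled average really moves $F$ in the asserted direction'' --- is a restatement of the claim to be proved, not an argument for it; some new input controlling the negatively weighted terms (for instance a monotonicity property of $(b_{P+1},\dots,b_{n+1})\mapsto F(a_1,\dots,a_P,b_{P+1},\dots,b_{n+1})$ under averaging) is needed. You should know that the paper's own proof simply sums the per-$j$ bounds without addressing the signs, so you have put your finger on the weak point of the published argument rather than introduced a new one; but as a proof, your proposal is incomplete at precisely the step that carries all the content.
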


\begin{rem}One might conjecture that the minimum of $F(a)$ without changing the signs of $a_j$ is attained in a vector of the form $\tilde a=(\xi,\dots,\xi, \eta,\dots,\eta)$, for some $\xi\geq 0, \eta < 0$. But for a vector of this form the formula from Corollary \ref{cor:volume formula simplex} and the estimates from the previous lemma do not work anymore. The example in Lemma \ref{lem:minimum small dimension} shows that this conjecture is even false for higher dimensions. \end{rem}

From Proposition \ref{lem:lower bound} we get the local result, i.e. Theorem \ref{thm:small central sections} (i).
This also immediately shows that $a_{\text{min}}$ is a global minimum for $n=2$. If we find all the local minima, in the sense of a fixed distribution of signs for the vector $a$, we can prove the global result.
For dimension $n=2,3,4$ we derive a different formula in the next section, which allows us to compute the remaining local minima. 
\subsection{Minimal central sections for small dimension}\label{sec:small sections}
We take a closer look at the geometric structure of $H_a \cap S$. For general $a$ we cannot say too much about $H_a \cap S$. The section is some polytope without obvious regularity. But if we assume $a$ to have the form $a=(a_1,\dots,a_P, \beta,\dots,\beta)$ for some $\beta<0$, as in Proposition \ref{lem:lower bound}, then we have some regularity and we get an additional volume formula. At least for small dimensions this leads to a solution of the minimal section problem.

Let $a=(a_1, \dots, a_P, a_{P+1}, \dots, a_{n+1})$ with $a_1, \dots, a_P > 0$ and $a_{P+1},\dots , a_{n+1}<0$. $P$ is the number of positive coordinates, $N:=n+1-P$ is the number of negative coordinates. The points $v_{ij}$ with $i \in \{1,\dots , P\}$ and $j\in  \{P+1,\dots, n+1\}$ are the intersection points of $H_a\cap S$ with the edges $\left[e_i , e_j\right]$ of the simplex $S$. They are of the form 
\[
v_{ij}= \Big(0,\dots,0, \frac{-a_j}{a_i-a_j},0,\dots,0,\frac{a_i}{a_i-a_j},0,\dots,0\Big),
\] 
where the $i$-th and the $j$-th coordinate are non-zero.
The section polytope $H_a\cap S$ is the convex hull of these $P\cdot N$ points $v_{ij}$.
Let us further decompose the section polytope $H_a\cap S$. It is the convex hull of polytopes of the form 
\[ K_i:=\conv \{ v_{ij}, j=P+1,\dots, n+1\} \] for $i=1,\dots,P$.

\begin{figure}[!ht]
    \centering
    \begin{minipage}[t]{0.47\linewidth}
        \centering
        \includegraphics[width=1\textwidth]{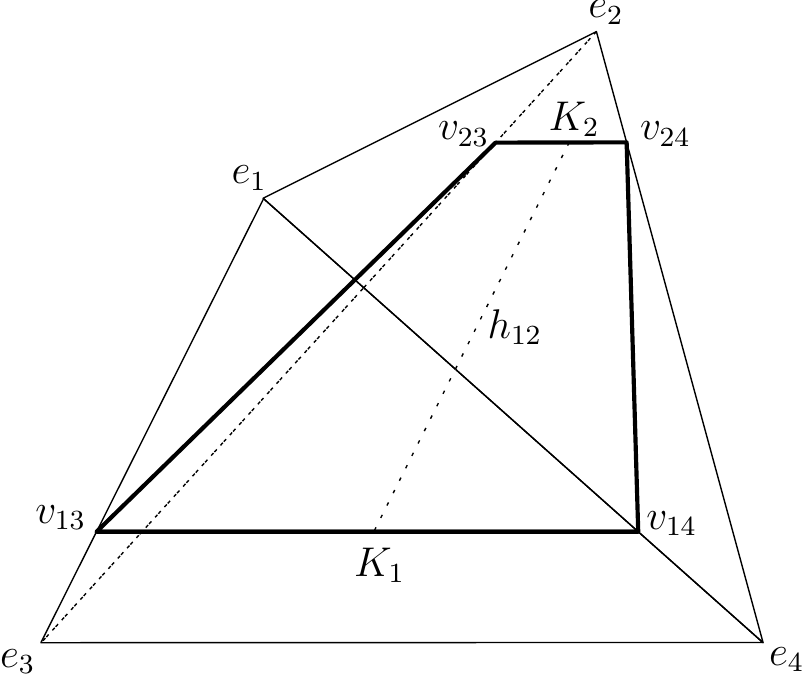}
        \caption{$H_a\cap S$ for the case $n=3$, $P=2$}
				\label{fig:balanced section 3d}
    \end{minipage}%
    \hfill
    \begin{minipage}[t]{0.47\linewidth}
        \centering
        \includegraphics[width=1  \textwidth]{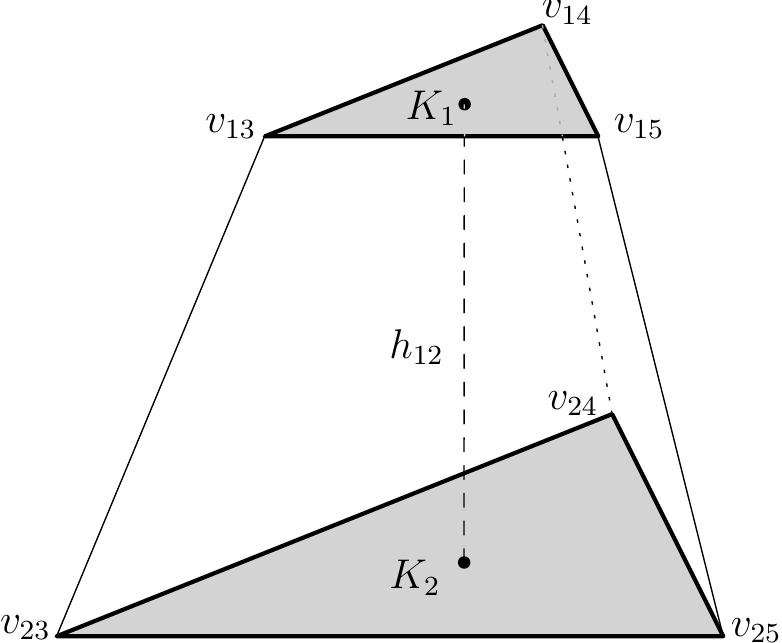}
				\caption{$H_a\cap S$ for the case $n=4$, $P=2$}
				\label{fig:balanced section 4d}
    \end{minipage}
\end{figure}

Now let $a_{P+1}=\dots=a_{n+1}=\beta$ and $\beta<0$, see Figures \ref{fig:balanced section 3d} and \ref{fig:balanced section 4d}. In this case the $K_i$ are regular simplices with $N$ vertices and side length $l_i:=\sqrt{2}\frac{a_i}{a_i-\beta}$, since for fixed $i$ and for all $j,\tilde j \in \{P+1,\dots,n+1\}$ we have
\[\norm { v_{ij}-v_{i\tilde j}}=l_i.\] 
The $(N-1)$-volume of these simplices is 
\[\vol_{N-1}(K_i)=\frac{\sqrt{N}}{(N-1)!}\left(\frac{a_i}{a_i-\beta}\right)^{N-1}.\]
Two simplices $K_i,K_{\tilde i}$ lie in parallel subspaces, 
since their edges $[v_{ij},v_{i \hat j}]$ and $[v_{\tilde i j},v_{\tilde i \hat j}]$ are parallel. The distance of $K_i$ and $K_{\tilde i}$ is given by the distance of their centroids, which is
\begin{align*}
h_{i,\tilde i} 
&:=\norm{\frac{1}{N}\sum_{j=P+1}^{n+1} v_{ij} -\frac{1}{N}\sum_{j=P+1}^{n+1}v_{\tilde i j}}\\
&=\norm{\frac{1}{N}\sum_{j=P+1}^{n+1} v_{ij} -v_{\tilde i j}} \\
&=\sqrt{ \frac{\beta^2}{(a_i- \beta)^2}+\frac{\beta^2}{(a_{\tilde i}-\beta)^2} + \frac {1}{N}\left(\frac{a_i}{a_i-\beta}-\frac{a_{\tilde i}}{a_{\tilde i}-\beta}\right)^2}.
\end{align*}
Note that $l_i$ and $h_{i \tilde i}$ are independent from the scaling of $a$. The value $\beta$ is determined by the constraints for the normal vector, up to scaling. We have $\beta=-\frac {1}{N}\sum_{j=1}^P a_j$. 
Two simplices $K_i$ and $K_{\tilde i}$ constitute a truncated pyramid, if $n=4$,  resp.  a frustum with a regular $(N-1)$-simplex as its base and another regular $(N-1)$-simplex as its top.

Let $P=2$. The section $H_a\cap S$ is the convex hull of two parallel $(N-1)$-simplices, i.e. a frustum. Let $V_1$, $V_2$ be the $(N-1)$-volume of the top resp. bottom and $h_1$, $h_2$ the relative heights. By homogeneity we know ${h_1}/{h_2}=\left({V_1}/{V_2}\right)^{1/(N-1)}$. Set $\lambda:={h_1 V_1^{-1/(N-1)}}$, then we have $\lambda={h_2V_2^{-1/(N-1)}}$.
The volume $V$ of $H_a\cap S$ is computed as 
\begin{align*}
V&=\frac {1}{N}\left(h_1 V_1-h_2V_2\right)\\
&=\frac{\lambda}{N} \left(V_1^{1+\frac {1}{N-1}}-V_2^{1+\frac {1}{N-1}}\right)\\
&=\frac{\lambda}{N}  \left(V_1^{\frac{N}{N-1}}-V_2^{\frac{N}{N-1}}\right) .
\end{align*}
Note that for $x,y >0$, $N\in\mathbb{N}$ the equation  $x^N-y^N=(x-y)\left(\sum_{m=0}^{N-1} x^{N-1-m}y^m\right)$ holds. Therefore 
\begin{align*}
V
&=\frac{\lambda \Big(V_1^{\frac {1}{N-1}}-V_2^{\frac {1}{N-1}}\Big)}{N} \sum_{m=0}^{N-1}V_1^{\frac{N-1-m}{N-1}}V_2^{\frac{m}{N-1}}\\
&=\frac{h_1-h_2}{N} \sum_{m=0}^{N-1}V_1^{\frac{N-1-m}{N-1}}V_2^{\frac{m}{N-1}}.
\end{align*}
So the volume of the section can be expressed by 
\begin{align*}
\vol_{n-1}(H_a\cap S)&=\frac{1}{N} \ h \sum_{m=0}^{n-2} \vol_{n-2}(K_1)^{\frac{n-2-m}{n-2}}\vol_{n-2}(K_2)^{\frac{m}{n-2}},
\end{align*}
with $h:=h_{1,2}$. 

Under the assumption  $P=2$, the normal vector $a$ only depends on one variable. Since the formulas are independent of scaling the vector $a$, it is sufficient to consider $a$ of the form
\begin{align*}
a=a(x):= \Big( &x, 1-x, -\frac{1}{N}, \dots,-\frac{1}{N} \Big)
\end{align*}
with $x\in \left(0,1\right)$.

So the volume is given by
\begin{align}
&V(x)\label{eq:volume formula V}:=\vol_{n-1}(H_{a(x)}\cap S)   \\	 
		&= \frac{\sqrt{N}}{N!} \sqrt{
													\frac{1}{(Nx+1)^2} 
												+ \frac{1}{(N(1-x)+1)^2} 
												+ N \left( \frac{x}{Nx+1}-\frac{1-x}{N(1-x)+1}\right)^2 
											  } \nonumber \\ \nonumber
			&\phantom{=}	\ \cdot \sum_{m=0}^{N-1} \left(\frac{Nx}{Nx+1} \right)^{{N-1-m}}
											 \left(\frac{N(1-x)}{N(1-x)+1} \right)^{{m}}.
\end{align}
For  $x=0$ and $x=1$ the geometric arguments do not work, since the involved simplices become degenerated. However, this function is still well defined. 
Note that for $x=0$ and $x=1$ the vector $a(x)$  corresponds to the vector $\left(a_{\text{min}},0\right)\in\R^{n+1}$ with $a_{\text{min}}\in \R^{n}$. 
The values of $V(0)$ and $V(1)$ equal the volume of the section for such $a(x)$, computed in (\ref{eq:volmin}). Therefore the formula may be extended to $[0,1]$.
Summarizing we proved 
\begin{lemma}
Let $a\in\R^{n+1}$, $\norm{a}=1$ and $\sum_{j=1}^{n+1} a_j=0$. Additionally let $a=\left( a_1, a_2, \beta,\dots, \beta\right)$ for some $\beta<0$ and $a_1, a_2 \geq 0$. Then with the definition (\ref{eq:volume formula V}): 
\[
\vol_{n-1}\left(H_a\cap S\right)= V\left(\frac{a_1}{a_1+a_2}\right).
\]
\end{lemma}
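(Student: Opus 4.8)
The plan is to observe that essentially everything needed has already been assembled in the discussion preceding the statement, and to supply the one step that is still missing: a continuity argument covering the two degenerate parameter values. First I would normalize. The hyperplane $H_a$ depends only on the direction of $a$, and — as noted above — the side lengths $l_i$, the distances $h_{i\tilde i}$, and hence the quantity $V$ of (\ref{eq:volume formula V}), are unchanged when $a$ is rescaled by a positive constant. Since $\beta<0$ and $\sum_{j=1}^{n+1}a_j=0$ force $a_1+a_2>0$, I may replace $a$ by $a/(a_1+a_2)$; the constraint $\sum_j a_j=0$ together with $a_3=\dots=a_{n+1}=\beta$ then gives $\beta=-\frac1N(a_1+a_2)=-\frac1N$ with $N:=n-1$, so the rescaled vector is exactly $a(x)=\big(x,\,1-x,\,-\frac1N,\dots,-\frac1N\big)$, where $x=\frac{a_1}{a_1+a_2}\in[0,1]$.

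For $x\in(0,1)$ the first two coordinates are strictly positive, so $P=2$ and $N=n+1-P=n-1$, and the analysis of this subsection applies directly: $H_{a(x)}\cap S$ is the convex hull of the two parallel regular $(N-1)$-simplices $K_1,K_2$, i.e.\ a frustum, and its $(n-1)$-volume equals $\frac1N\,h\sum_{m=0}^{n-2}\vol_{n-2}(K_1)^{\frac{n-2-m}{n-2}}\vol_{n-2}(K_2)^{\frac{m}{n-2}}$ with $h=h_{1,2}$. Substituting $\vol_{N-1}(K_i)=\frac{\sqrt N}{(N-1)!}\big(\frac{a_i}{a_i-\beta}\big)^{N-1}$, the explicit expression for $h_{1,2}$, and $a_1=x$, $a_2=1-x$, $\beta=-\frac1N$, and using $N-1=n-2$ — so that the product of the two $\vol_{n-2}(K_i)$-powers contributes a single factor $\frac{\sqrt N}{(N-1)!}$ and the frustum height $h_{1,2}$ reproduces the square root in (\ref{eq:volume formula V}) — turns this expression into $V(x)$.

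It remains to handle $x\in\{0,1\}$, where one of $K_1,K_2$ collapses to a point and the frustum picture degenerates. Here I would argue by continuity: $x\mapsto a(x)$ is a continuous map $[0,1]\to\R^{n+1}\setminus\{0\}$, so $x\mapsto\vol_{n-1}(H_{a(x)}\cap S)$ is continuous on $[0,1]$ by the continuity of $a\mapsto\vol_{n-1}(H_a\cap S)$; and $V$ is visibly continuous on $[0,1]$, being a product of a constant, the square root of a strictly positive sum of squares, and the finite sum $\sum_{m}\big(\frac{Nx}{Nx+1}\big)^{N-1-m}\big(\frac{N(1-x)}{N(1-x)+1}\big)^{m}$ of products of powers of continuous functions. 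Since these two continuous functions agree on the dense set $(0,1)$ by the previous paragraph, they agree on all of $[0,1]$, which gives $\vol_{n-1}(H_a\cap S)=V\big(\frac{a_1}{a_1+a_2}\big)$ for every admissible $a$. (Alternatively one may note, as above, that for $x\in\{0,1\}$ the section is a cone with apex at a vertex of $S$ over a face-parallel section of the opposite facet, whose volume is computed directly as in (\ref{eq:volmin}) and seen to equal $V(0)$, resp.\ $V(1)$.)

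The only real work here is bookkeeping rather than anything conceptual: one must verify that the substitution in the second paragraph reproduces (\ref{eq:volume formula V}) term by term — precisely the computation effectively performed in the paragraphs leading to the statement — and that $V$ extends continuously across the endpoints, which holds since each factor $\big(\frac{Nx}{Nx+1}\big)^{k}$ with integer $k\ge 0$ is continuous at $x=0$, and symmetrically at $x=1$.
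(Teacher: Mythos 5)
Your proposal is correct and follows essentially the same route as the paper: the content of the lemma is exactly the frustum decomposition and the substitution $a\mapsto a(x)$ carried out in the preceding paragraphs, and your normalization $\beta=-\tfrac{1}{N}(a_1+a_2)$ with $x=\tfrac{a_1}{a_1+a_2}$ matches the paper's reduction. The only (harmless) deviation is at the endpoints $x\in\{0,1\}$, where you invoke continuity of $a\mapsto\vol_{n-1}(H_a\cap S)$ and of $V$, while the paper instead checks directly that $V(0)$ and $V(1)$ agree with the volume computed in (\ref{eq:volmin}); both arguments are valid.
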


For normal vectors of this special form we now determine the minimal section volume.
\begin{lemma}\label{lem:minimum small dimension}
Let $V$ be as defined above. Then for $N=2,3,4$ and $x\in [0,1]$
\[V(x) \geq V\left(\frac{1}{2}\right).\]
But for $N=5$ we have $V(\frac{1}{2}) > V(0)$.
\begin{proof}
The function $V$ is differentiable. So one finds the extrema by finding the zeros of its derivative. The calculations are elementary and we just give the results.

Note that it is sufficient to consider $x\in \left[0,\frac{1}{2}\right]$ due to the symmetry of the function. For $N=2,3,4$ the derivative of $V$ has one zero in $\frac{1}{2}$ and one more zero in $ \left(0,\frac{1}{2}\right)$. The other zeros are complex or not in $\left(0,\frac{1}{2}\right)$. Comparing the function values at the critical points we know that $V$ has a global minimum in $\frac{1}{2}$.

For $N=5$ we obtain a different behavior. $V$ still has a minimum in $\frac{1}{2}$, but this is not the global minimum anymore:
\[
V(0)=\frac{125}{186624}\sqrt{5}\sqrt{42}
<\frac{625}{201684}\sqrt{10}=V\left(\frac{1}{2}\right)\qedhere
\]
\end{proof}
\end{lemma}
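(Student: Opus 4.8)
The plan is to treat this as a one‑variable calculus problem for the explicit function $V$ of (\ref{eq:volume formula V}), exploiting its symmetry and reducing the critical‑point condition to a single polynomial equation for each fixed small $N$. First I would record the symmetry $V(x)=V(1-x)$: interchanging $x\leftrightarrow 1-x$ swaps the two summands under the square root, leaves the cross term invariant, and reverses the order of the sum $\sum_{m=0}^{N-1}$. Hence $V'(1/2)=0$ automatically, and it suffices to study $V$ on $[0,\tfrac12]$ with the understanding that $x=\tfrac12$ is always a critical point.

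To locate the remaining critical points I would write $V=A\cdot B$, where $A(x)=\frac{\sqrt N}{N!}\sqrt{R(x)}$ with $R$ the rational function under the root, and $B(x)=\sum_{m=0}^{N-1}p^{\,N-1-m}q^{\,m}=\frac{p^N-q^N}{p-q}$ with $p=\frac{Nx}{Nx+1}$ and $q=\frac{N(1-x)}{N(1-x)+1}$. Taking the logarithmic derivative gives $\frac{V'}{V}=\frac{R'}{2R}+\frac{B'}{B}$, which vanishes precisely when $R'B+2RB'=0$ (the functions $R,B$ have no zero in $(0,1)$). Clearing the common denominator, which is a power of $(Nx+1)(N(1-x)+1)$, converts this into a polynomial equation $P_N(x)=0$ with rational coefficients, of explicit degree for each fixed $N$.

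For $N=2,3,4$ I would factor $P_N$ over $\mathbb{Q}$, pulling out the guaranteed factor $(2x-1)$ corresponding to $x=\tfrac12$, and then verify that the cofactor contributes at most one root in $(0,\tfrac12)$, which turns out to be a local maximum of $V$. Since $V(\tfrac12)=\frac{\sqrt N}{N!}\,\frac{2\sqrt2}{N+2}\,N\big(\frac{N}{N+2}\big)^{N-1}$ and $V(0)=\frac{\sqrt N}{N!}\sqrt{\frac{N+2}{N+1}}\big(\frac{N}{N+1}\big)^{N-1}$ are exact surd expressions (and the interior critical value is likewise algebraic), comparing the values at $x=\tfrac12$, at the interior critical point, and at the endpoint $x=0$ is a finite exact computation. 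This shows $V(\tfrac12)$ is the smallest of these, hence the global minimum on $[0,1]$ by symmetry.

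For $N=5$ no critical‑point analysis is needed. At $x=\tfrac12$ one has $p=q=\frac{N}{N+2}$, so the sum collapses to $Np^{N-1}$, while at $x=0$ one has $p=0$ and only the $m=N-1$ term survives; plugging $N=5$ into the two closed forms above yields $V(0)=\frac{125}{186624}\sqrt5\sqrt{42}$ and $V(\tfrac12)=\frac{625}{201684}\sqrt{10}$, and a direct numerical comparison gives $V(0)<V(\tfrac12)$. The main obstacle is purely computational: taming the polynomial $P_N$ and certifying that for $N=2,3,4$ it has no unexpected root in $(0,\tfrac12)$. Because the degrees and coefficients are fully explicit for these three values, this is a finite exact check (by hand or computer algebra), and the reversal at $N=5$ confirms that no uniform argument can replace the case‑by‑case verification.
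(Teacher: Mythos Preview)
Your proposal is correct and follows essentially the same approach as the paper's own proof: exploit the symmetry $V(x)=V(1-x)$ to reduce to $[0,\tfrac12]$, locate the critical points of the one-variable function $V$ (a finite polynomial computation for each $N=2,3,4$), and compare the values at the critical points and the endpoint $x=0$; for $N=5$ just evaluate $V(0)$ and $V(\tfrac12)$ exactly. The only difference is that you spell out the mechanics (logarithmic derivative, clearing denominators, factoring out $(2x-1)$, endpoint comparison) that the paper leaves as ``elementary calculations''.
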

\begin{proof}[Proof of Theorem \ref{thm:small central sections} (ii)]
Recall that multiplying the normal vector by $-1$ or permuting the coordinates does not change the volume of the section.

If $a=(a_1,\dots,a_{n+1})$ with $a_1>0$ and $a_2,\dots,a_{n+1}<0$, by Proposition \ref{lem:lower bound} we have $\vol_{n-1}(H_a\cap S)\geq \vol_{n-1}(H_{a_{\text{min}}}\cap S)$.

For $n=2$ this is already sufficient.

Let $n=3$ and $a_1,a_2>0$ and $a_3,a_4<0$. Then by Lemma \ref{lem:minimum small dimension} with $N=2$ we know 
\[\vol_{n-1}\left(H_a\cap S\right)\geq \vol_{n-1}\left(H_{\left(\frac{1}{2},\frac{1}{2},-\frac{1}{2},-\frac{1}{2}\right)}\cap S\right)=\frac{1}{2}>\vol_{n-1}\left(H_{a_{\text{min}}}\cap S\right).\] This solves the case $n=3$.

Finally let $n=4$. For $a_1,a_2>0$ and $a_3,a_4,a_5<0$ Lemma \ref{lem:minimum small dimension} with $N=3$ yields 
\[\vol_{n-1}\left(H_a\cap S\right)\geq \vol_{n-1}\left(H_{a'}\cap S\right)=\frac{9\sqrt{6}}{125}>\vol_{n-1}\left(H_{a_{\text{min}}}\cap S\right)\]
with $a'=\sqrt{\frac{6}{5}}\left(\frac{1}{2} ,\frac{1}{2},-\frac {1}{3},-\frac {1}{3} ,-\frac {1}{3}\right)$.
\end{proof}

\begin{rem}For $\ell_p$-balls with $0<p\leq 2$ the hyperplane section volume is given by
\[
\vol_{n-1}\left(B_p^n \cap H_a\right)=C_p \int_0^{\infty} \prod_{j=1}^n \gamma_p (ta_j)\D t,
\]
where $C_p$ is some constant depending on $p$ and $n$ and $\gamma_p$ is the Fourier transform of the function $\exp (-|\cdot|^p)$ on $\R$, see \cite[Thm. 7.7]{Koldobsky2005}. Koldobsky used the fact that $\ln (\gamma_p(\sqrt{\cdot}))$ is convex on $\R_{\geq 0}$ to find the minimum and the maximum. The maximum is attained if one coordinate is $1$ and the rest is zero. The minimum is attained if all coordinates are equal.

This technique cannot be used for the simplex. The analogue to $t\mapsto \gamma_p(t)$ is the function $t\mapsto \frac{1}{1+\mathrm{i} t}$. This function is complex-valued. Furthermore the example in Lemma \ref{lem:minimum small dimension} shows that for the simplex the \emph{balanced} vector cannot be minimal in general.\end{rem}

\section{Bounds for $k$-dimensional sections}
The maximal central hyperplane section of the simplex is the one that contains $n-1$ vertices and the midpoint of the opposite edge. The natural generalization of this hyperplane section to lower dimensional sections is
\[
H\cap S=\conv \bigg\{e_1,\dots,e_{k-1},\frac{1}{n+2-k}\sum_{j=k}^{n+1}e_j\bigg\},
\] 
where $H\subset \R^{n+1}$ is a suitable $k$-dimensional subspace. 
This polytope contains $k-1$ vertices and the centroid of the opposite face. It has $(k-1)$-volume $\frac{\sqrt{n+1}}{(k-1)!} \frac{1}{\sqrt{n+2-k}}$, which is computed by the elementary volume formula for cones. 

With an extra condition we prove that this is indeed maximal. Without additional conditions we prove a bound that differs from this conjectured maximum by some factor depending on the dimension resp. codimension of the intersecting subspace. The main tool for the proof is the Brascamp-Lieb inequality in a normalized form \cite{Ball1989}.
\begin{lemma}[Brascamp-Lieb]
Let $u_1,\dots,u_{n+1}$ be unit vectors in $H\subset \R^{n+1}$, $H$ a $k$-dimensional subspace and $d_1,\dots, d_{n+1}>0$ satisfying $\sum_{j=1}^{n+1}d_j \skalar{\ \cdot \ }{u_j}u_j = \Id_{H}$. Then for integrable functions $f_1,\dots, f_{n+1}:\R \to [0,\infty)$ we have
\[
\int_{H} \prod_{j=1}^{n+1} f_j\left(\skalar{u_j}{x}\right)^{d_j} \D x\leq \prod_{j=1}^{n+1} \left(\int_{\R}f_j(t)\D t\right)^{d_j}.
\]
\end{lemma}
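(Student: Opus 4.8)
The plan is to prove this by optimal transport, following Barthe. First I would normalize: replacing $f_j$ by $f_j/\int_\R f_j$ multiplies both sides by the same factor $\prod_j(\int_\R f_j)^{-d_j}$, so it suffices to treat the case where each $f_j$ is a probability density and to establish $\int_H\prod_j f_j(\skalar{u_j}{x})^{d_j}\,\D x\le 1$. Fix the standard Gaussian density $\gamma(t)=(2\pi)^{-1/2}\exp(-t^2/2)$ on $\R$, and for each $j$ let $T_j\colon\R\to\R$ be the increasing transport map pushing the density $f_j$ onto $\gamma$ (the one-dimensional monotone rearrangement), so that $T_j'>0$ and $f_j(s)=\gamma(T_j(s))\,T_j'(s)$. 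The geometric object driving the proof is the map $\Theta(x):=\sum_{j=1}^{n+1}d_j\,T_j(\skalar{u_j}{x})\,u_j$ on $H$. Since $\Theta=\nabla\phi$ for the convex function $\phi$ with $D\Theta(x)=\sum_j d_j\,T_j'(\skalar{u_j}{x})\,u_ju_j^{\top}\succ 0$, the map $\Theta$ is injective, and I note $\sum_j d_j=\operatorname{tr}\Id_H=k$.

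Substituting $f_j(s)=\gamma(T_j(s))T_j'(s)$ into the integrand and splitting the product reduces the bound to two pointwise inequalities. The first is the determinant inequality $\prod_j T_j'(\skalar{u_j}{x})^{d_j}\le\det\big(\sum_j d_j\,T_j'(\skalar{u_j}{x})\,u_ju_j^{\top}\big)=\det D\Theta(x)$, which I would prove once in the form: if $c_j>0$ and $\sum_j d_ju_ju_j^{\top}=\Id_H$, then $\det(\sum_j d_jc_j\,u_ju_j^{\top})\ge\prod_j c_j^{d_j}$. By Cauchy--Binet, writing $U_S$ for the $k\times k$ matrix whose columns are the $u_i$, $i\in S$, both the constraint $\det\Id_H=1$ and $\det(\sum_j d_jc_j u_ju_j^{\top})$ expand over $k$-subsets $S$; with $\lambda_S:=(\prod_{i\in S}d_i)\det(U_S)^2$ one gets a probability vector $\sum_{|S|=k}\lambda_S=1$ whose marginals are $\sum_{S\ni j}\lambda_S=d_j$ (Jacobi's formula at $\Id_H$), so weighted AM--GM yields $\det(\sum_j d_jc_ju_ju_j^{\top})=\sum_S\lambda_S\prod_{i\in S}c_i\ge\prod_j c_j^{d_j}$. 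The second inequality is $\norm{\Theta(x)}^2\le\sum_j d_j\,T_j(\skalar{u_j}{x})^2$, which follows from Cauchy--Schwarz combined with the identity $\sum_j d_j\skalar{u_j}{w}^2=\norm{w}^2$ applied to $w=\Theta(x)$. Together these give the pointwise estimate $\prod_j f_j(\skalar{u_j}{x})^{d_j}\le(2\pi)^{-k/2}\exp(-\tfrac12\norm{\Theta(x)}^2)\det D\Theta(x)$, and integrating over $H$ and substituting $y=\Theta(x)$ (using only injectivity) gives $\int_H\prod_j f_j(\skalar{u_j}{x})^{d_j}\,\D x\le(2\pi)^{-k/2}\int_{\Theta(H)}\exp(-\tfrac12\norm{y}^2)\,\D y\le(2\pi)^{-k/2}\int_H\exp(-\tfrac12\norm{y}^2)\,\D y=1$.

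The main obstacle is the measure-theoretic justification of the transport step rather than any inequality. I would first reduce by approximation to the case where each $f_j$ is smooth, strictly positive and rapidly decaying, so that $T_j$ is a smooth increasing bijection of $\R$; then $\Theta$ is a $C^1$ injective map and the change-of-variables formula applies, yielding $\int_H g(\Theta(x))\det D\Theta(x)\,\D x=\int_{\Theta(H)}g\le\int_H g$ for $g\ge 0$ (note that surjectivity of $\Theta$ is not needed). Care is required to choose the approximating densities so that both sides of the claimed inequality converge, and to control the boundary behaviour of $T_j$ where $f_j$ vanishes; passing to the limit then recovers the inequality for all integrable $f_j\ge 0$.
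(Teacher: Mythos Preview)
The paper does not prove this lemma at all: it is quoted as a known result in the normalized form due to Ball \cite{Ball1989}, and the proof of Theorem~\ref{thm:large k dimensional sections} simply applies it as a black box. So there is nothing to compare against on the paper's side.

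That said, your proposal is the Barthe transport argument and is correct. A few remarks on the details you flagged or left implicit. Your determinant inequality $\det\big(\sum_j d_j c_j u_ju_j^{\top}\big)\ge\prod_j c_j^{d_j}$ via Cauchy--Binet and weighted AM--GM is the standard geometric core; the marginal identity $\sum_{S\ni j}\lambda_S=d_j$ is exactly the derivative of $\det\big(\sum_j d_jc_ju_ju_j^{\top}\big)$ in $c_j$ at $c=(1,\dots,1)$, where the matrix is $\Id_H$, so Jacobi gives $\operatorname{tr}(d_ju_ju_j^{\top})=d_j$ as you say. For the norm bound, the clean way to see $\norm{\Theta(x)}^2\le\sum_j d_j\,T_j(\skalar{u_j}{x})^2$ is to test $\Theta(x)$ against an arbitrary $w\in H$: writing $t_j:=T_j(\skalar{u_j}{x})$, Cauchy--Schwarz with weights $d_j$ gives
\[
\skalar{\Theta(x)}{w}^2=\Big(\sum_j d_j t_j\skalar{u_j}{w}\Big)^2\le\Big(\sum_j d_j t_j^2\Big)\Big(\sum_j d_j\skalar{u_j}{w}^2\Big)=\Big(\sum_j d_j t_j^2\Big)\norm{w}^2,
\]
and taking $w=\Theta(x)$ yields the claim. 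Your handling of the regularity issue (reduce to smooth strictly positive densities so that each $T_j$ is a $C^1$ bijection, use injectivity of $\Theta$ for the change of variables, then approximate) is the standard route and is fine; one only needs monotone or dominated convergence on the left-hand side and the trivial convergence of the constants on the right. Since the paper offers no proof of its own, your write-up would in fact supply strictly more than the paper does.
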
 

\begin{proof}[Proof of Theorem \ref{thm:large k dimensional sections}]
Let $P$ be the orthogonal projection onto $H$, $d_j:=\norm{Pe_j}$ and $u_j:=\frac{Pe_j}{d_j}$.
Then $\sum_{j=1}^{n+1}d_j^2 \skalar{\ \cdot \ }{u_j}u_j=Id_H$.
Furthermore $\sum_{j=1}^{n+1} d_j^2$ is just the trace resp. the rank of the projection $P$, so $\sum_{j=1}^{n+1} d_j^2=k$. With the operator norm of the projection we observe \[\norm{Pe_j}\leq \norm{P}_{\text{op}}\norm{e_j}\leq 1.\] 
And if we require $H$ to contain the centroid $c$ we have by Pythagoras
\[\norm{Pe_j}^2=\norm{c+P(e_j-c)}^2=\norm{c}^2+\norm{P(e_j-c)}^2\geq \frac{1}{n+1}.\] 
Note that $\skalar{c}{P(e_j-c)}=\skalar{c}{e_j-c}=\skalar{c}{e_j}-\skalar c c=\frac{1}{n+1}-\frac{1}{n+1}=0$.

So we get the conditions
\begin{equation}\label{eq:constraints k-dim}
\sum_{j=1}^{n+1} d_j^2=k,  \quad  d_j\leq 1, \quad d_j \geq \frac{1}{\sqrt{n+1}} \text{ for } j=1,\dots, n+1.
\end{equation}
Now we consider the additional assumption of Theorem \ref{thm:large k dimensional sections}. Let \mbox{$\dist (H,e_j)^2 \leq \frac{n+1-k}{n+2-k}$} for all $j=1,\dots,n+1$. For each $x\in \R^{n+1}$ we have 
\[
\dist(H,x)=\norm{Px-x}=\norm{AA^t x},
\]
where $A$ is the matrix whose columns are the vectors $a^1, \dots, a^{n+1-k}$. \\
If $\norm{AA^te_j}\leq \sqrt{\frac{n+1-k}{n+2-k}}$ then $\norm{Pe_j}=\norm{1-AA^te_j}\geq \sqrt{\frac{1}{n+2-k}}$. 
Therefore
\begin{equation}\label{eq:constraints k-dim2}
 d_j \geq \frac{1}{\sqrt{n+2-k}}  \ \text{ for } j=1,\dots, n+1.
\end{equation}

Now we apply the Brascamp-Lieb inequality. For $x\in H$ we have 
\[x_j=\skalar{x}{e_j}=\skalar{Px}{e_j}=\skalar{x}{Pe_j}=\skalar{x}{d_ju_j}\] 
and therefore
\begin{align*}
		\int\limits_{H\cap \R^{n+1}_{\geq 0}} \prod_{j=1}^{n+1} \exp\left(-x_j\right) \D x 
&=	\int\limits_{H\cap \R^{n+1}_{\geq 0}} \prod_{j=1}^{n+1} \exp\left(-\skalar{x}{d_j u_j}\right) \D x \\
&=	\int\limits_{H\cap \R^{n+1}_{\geq 0}} \prod_{j=1}^{n+1} \left(\exp\left(-\frac{1}{d_j}\skalar{x}{u_j}\right)\right)^{d_j^2} \D x \\
&\leq	 \prod_{j=1}^{n+1} \left(\int_{0}^{\infty} \exp\left(-\frac{1}{d_j}s\right) \D s \right)^{d_j^2} \\
&=\prod_{j=1}^{n+1} d_j^{d_j^2}.
\end{align*}
We maximize $\prod_{j=1}^{n+1} d_j^{d_j^2}$ under the constraints given in (\ref{eq:constraints k-dim}). 
We consider the equivalent problem to maximize $F(x):=\sum_{j=1}^{n+1} x_j \ln x_j$ under the constraints 
\[
\sum_{j=1}^{n+1} x_j=k, \  \frac{1}{n+1} \leq x_j \leq 1.
\] 
The function $x\mapsto x \ln x$ is convex on $(0,1)$. Therefore also $\sum_{j=1}^{n+1} x_j \ln x_j$ is convex on $(0,1)^{n+1}$. 
The set 
\[
\bigg\{x\in [0,1]^{n+1} \mid \sum_{j=1}^{n+1} x_j=k, \ \frac{1}{n+1} \leq x_j \leq 1\bigg\}
\]
is also convex, so the maximum is attained in some extremal point of the set. The extremal points are permutations of the point
 \[x=\Big({1, \dots , 1}, {\frac{1}{n+1},\dots,\frac{1}{n+1}},\frac{k}{n+1}\Big),\]
with $k-1$ coordinates equal to $1$ and $n+1-k$ coordinates equal to $\frac 1 {n+1}$. \\
For $d_j$ this means
\[
(d_1,\dots, d_{n+1})=\Big({1, \dots, 1}, {\frac{1}{\sqrt{n+1}},\dots,\frac{1}{\sqrt{n+1}}},\sqrt{\frac{k}{n+1}}\Big),
\]
with $k-1$ coordinates equal to $1$ and $n+1-k$ coordinates equal to $\frac 1 {\sqrt{n+1}}$.\\
Therefore the maximum is $\prod_{j=1}^{n+1} d_j^ {d_j^2}=\frac{\sqrt{k}^{\frac{k}{n+1}}}{\sqrt{n+1}}$.

With the additional condition (\ref{eq:constraints k-dim2}) we find that
\[
(d_1,\dots, d_{n+1})=\Big(1, \dots, 1, {\frac{1}{\sqrt{n+2-k}},\dots,\frac{1}{\sqrt{n+2-k}}}\Big)
\]
(with $k-1$ coordinates equal to $1$ and $n+2-k$ coordinates equal to $\frac 1 {\sqrt{ n+2-k}}$)
is maximal and $\prod_{j=1}^{n+1} d_j^ {d_j^2}=\frac{1}{\sqrt{n+2-k}}$.
\end{proof}
\begin{rem}[Accuracy of the bounds]
(i) Fix $k\in\mathbb{N}$. Then the quotient of the proven bound (\ref{eq:k dim bound nonopt}) and the conjectured optimal bound (\ref{eq:k dim bound opt}) from Theorem \ref{thm:large k dimensional sections} tends to $1$ for $n\to \infty$.
So this is asymptotically  optimal.

(ii) Now we fix the codimension of the section, i.e. fix $d\in\mathbb{N}$ and let $k=n-d$. Then for $n\to \infty$ the quotient tends to $\sqrt{2+d}$.

(iii) In general, Brascamp-Lieb is a sharp inequality and its application gives sharp estimates. For example in the consideration of sections of $\ell_p$-balls one gets sharp results \cite{Barthe2001}, \cite{Ball1989}. 
The reason why we do not get a sharp bound here is the additional restriction $c\in H$.
If we do not assume the subspace $H$ to contain the centroid or even fulfill some extra condition, there is no lower bound on $d_j$ as in (\ref{eq:constraints k-dim}) or (\ref{eq:constraints k-dim2}). Then the integral is simply bounded by $1$. This bound is attained by $H=\spann(e_1,\dots,e_k)$. The corresponding section of $S$ is a $(k-1)$-face which has distance $\frac{1}{\sqrt{k}}$ from the origin. 
\end{rem}
\section{Irregular simplices}\label{chapter_irregular simplex}
For the regular simplex the maximal hyperplane section is a face, if the distance to the centroid is not prescribed. This can be proven by a theorem of Fradelizi about isotropic convex bodies. He proves that the maximal section of a cone in isotropic position is its base \cite[Corollay 3 (2.), p. 169]{Fradelizi1999}. This is still true for deformed and therefore non-isotropic simplices in dimensions $2,3$ and $4$. The case $n=2$ is obvious. The case $n=3$ was considered by \cite{Eggleston1963}. The first proof that this phenomenon does not generalize to all dimensions appears in \cite{Walkup1968}. The full solution, i.e. $n=4$ affirmative and $n\geq 5$ negative, was given by Philip \cite{Philip1972}. He uses Walkup's construction for $n=5$ and concluded by induction.

We modify our volume formula from Corollary \ref{cor:volume formula simplex} to be applicable to irregular simplices. Then we give a direct proof that for odd dimensions larger than $5$ there exist simplices such that not some of the faces is the maximal section. The constructed example uses Walkup's idea.

\subsection{Volume formula}
Every simplex is an affine image of the regular simplex. Here we denote the regular simplex by $S_{\text{reg}}$ and a general simplex by $S$. The formula follows by an application of the transformation theorem.
\begin{proposition}\label{thm: deformed simplex}
Let $S=\conv \{ v^{l} \mid l=1,\dots, n+1\}$ be an arbitrary simplex in $\R^{n+1}.$ Without loss of generality we may assume that $\sum_{j=1}^{n+1} v_j^{l}=1$ for all $l=1, \dots, n+1$, i.e. all vertices lie in the affine hyperplane defined by the regular simplex.
Let $T$ be the linear transformation that maps $S$ to the regular simplex $S_{\text{reg}}$. Let $a\in \R^{n+1}$ with $\norm{a}=1$. Then we have
\begin{align*}
\vol_{n-1} &\left(H_a\cap S\right)\\ 
= &\frac{\det \left(v^1 \dots v^{n+1}\right) }{\norm{T^{-1 \ast } a}} \ \frac{\sqrt{n+1-\left(\sum_{j=1}^{n+1} a_j\right)^2}}{\sqrt{n+1-\left( \sum_{j=1}^{n+1} \tilde a_j\right)^2}} \ \vol_{n-1} \left(H_{\tilde a} \cap S_{\text{reg}}\right),
\end{align*}
where $\tilde a := \frac{T^{-1 \ast} a}{\norm{T^{-1 \ast } a}}$.
\begin{proof}
The transformation $T$ maps $S$ to the regular simplex bijectively. Therefore $T^{-1}$ is given by 
\[
T^{-1}=(v^1 \dots v^{n+1}),
\] 
where the $v^j$ are column vectors. We analyze the behavior of the $(n-1)$-volume under $T$. 

We only sketch the proof. For $(n+1)$-dimensional subsets we have the transformation theorem in $\R^{n+1}$. 
The set $H\cap S$ is $(n-1)$-dimensional. We enlarge this set in two directions. We thicken $H\cap S$ in direction $a$, see (\ref{eq:thicken}), and we take the convex hull of this thickened set and $\{0\}$. Then we consider the image of this $(n+1)$-dimensional set and use the transformation theorem to obtain the formula from this proposition.
\end{proof}
\end{proposition}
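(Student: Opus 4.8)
The plan is to lift the $(n-1)$-dimensional quantity $\vol_{n-1}(H_a\cap S)$ to a genuinely $(n+1)$-dimensional volume, on which the linear map $T$ acts simply through its Jacobian $|\det T|$, and then descend again on the regular-simplex side. The obstruction to applying the transformation theorem directly is dimensional: writing $\bar S=\conv(S\cup\{0\})$ as in Section 2, the cone-section $H_a\cap\bar S$ is only $n$-dimensional inside $\R^{n+1}$ and $H_a\cap S$ only $(n-1)$-dimensional, so neither transforms under $T$ by a clean determinant factor. The remedy, using the thickening device (\ref{eq:thicken}), is to fatten $H_a\cap\bar S$ in the normal direction $a$ into a full-dimensional prism.

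Concretely, I would first record that $T$ is linear with $T(0)=0$, so $T(\bar S)=\bar S_{\text{reg}}$, and that $T(H_a)=H_{\tilde a}$: for $y=Tx$ the condition $\skalar{a}{x}=0$ reads $\skalar{T^{-1\ast}a}{y}=0$, while $\tilde a$ is a positive multiple of $T^{-1\ast}a$. Set $U_\epsilon:=\{y+ta\mid y\in H_a\cap\bar S,\ t\in[-\epsilon,\epsilon]\}$. Since $\norm{a}=1$ and $a\perp H_a$, this is a right prism with $\vol_{n+1}(U_\epsilon)=2\epsilon\,\vol_n(H_a\cap\bar S)$ exactly, and the transformation theorem gives $\vol_{n+1}(TU_\epsilon)=|\det T|\,\vol_{n+1}(U_\epsilon)$. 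On the other hand $TU_\epsilon=\{z+t\,Ta\mid z\in H_{\tilde a}\cap\bar S_{\text{reg}},\ t\in[-\epsilon,\epsilon]\}$ is the image section thickened in the direction $Ta$, and here is the crucial point: $Ta$ is in general \emph{oblique} to $H_{\tilde a}$, so $TU_\epsilon$ is a slanted prism whose height is the projection of $2\epsilon\,Ta$ onto the unit normal $\tilde a$. Hence $\vol_{n+1}(TU_\epsilon)=2\epsilon\,|\skalar{Ta}{\tilde a}|\,\vol_n(H_{\tilde a}\cap\bar S_{\text{reg}})$. Equating the two expressions and cancelling $2\epsilon$ yields
\[
|\det T|\,\vol_n(H_a\cap\bar S)=|\skalar{Ta}{\tilde a}|\,\vol_n(H_{\tilde a}\cap\bar S_{\text{reg}}).
\]

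It then remains to pass from cone-sections to base-sections and to identify the scalar. Applying the pyramid formula $\vol_n(H_b\cap\bar S)=\tfrac1n\dist(H_b\cap\tilde S,0)\,\vol_{n-1}(H_b\cap S)$ on both sides together with Lemma \ref{lem:distance to 0} in codimension one, the two distances contribute exactly the ratio
\[
\frac{\dist(H_{\tilde a}\cap\tilde S,0)}{\dist(H_a\cap\tilde S,0)}=\frac{\sqrt{n+1-\left(\sum_j a_j\right)^2}}{\sqrt{n+1-\left(\sum_j \tilde a_j\right)^2}},
\]
which is the middle factor of the claim. For the prefactor I would compute $\skalar{Ta}{\tilde a}$: since $\tilde a=T^{-1\ast}a/\norm{T^{-1\ast}a}$ and $T^{T}T^{-1\ast}=\Id$, one gets $\skalar{Ta}{T^{-1\ast}a}=\skalar{a}{a}=1$, so $\skalar{Ta}{\tilde a}=1/\norm{T^{-1\ast}a}>0$. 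Combining this with $\det T^{-1}=\det(v^1\cdots v^{n+1})$ gives $|\skalar{Ta}{\tilde a}|/|\det T|=\det(v^1\cdots v^{n+1})/\norm{T^{-1\ast}a}$, and assembling the three factors produces the stated formula.

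The step I expect to demand the most care is the oblique-prism volume on the image side: because $T$ does not preserve orthogonality, the thickening direction $a$ (normal to $H_a$) is sent to $Ta$, which meets $H_{\tilde a}$ at an angle, and it is precisely the cosine of that angle, $\skalar{Ta}{\tilde a}$, that generates the factor $1/\norm{T^{-1\ast}a}$ in the final coefficient. Isolating this factor — rather than a spurious $2\epsilon$ that merely cancels — is the heart of the argument. The remaining points are routine: the $\epsilon$-identities are exact for prisms, so no limit is actually needed once the prism volumes are written down; the sign of $\det(v^1\cdots v^{n+1})$ is fixed by the ordering of the vertices and is read as its absolute value, volume being orientation-free; and degenerate configurations (empty or lower-dimensional sections) are covered by the continuity of $a\mapsto\vol_{n-1}(H_a\cap S)$ already invoked in Section 2.
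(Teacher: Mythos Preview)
Your proof is correct and follows essentially the same strategy the paper sketches: lift the $(n-1)$-dimensional section to a full $(n+1)$-dimensional set by coning with the origin and thickening in the normal direction $a$, apply the transformation theorem, and then descend via the pyramid formula and Lemma~\ref{lem:distance to 0}. Your choice to cone first and then thicken (rather than the paper's thicken-then-cone) makes the prism volume identities exact and spares the limit $\epsilon\to0$, and your explicit identification of the oblique-prism factor $\skalar{Ta}{\tilde a}=1/\norm{T^{-1\ast}a}$ cleanly fills in what the paper leaves implicit.
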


\subsection{A simplex with a large cross section}
We show that for odd dimension $n \geq 5$ there is a simplex whose largest hyperplane section is not one of its faces.
\begin{proof}[Proof of Theorem \ref{ex: example deformed simplex}]
We consider the regular simplex with $n+1$ vertices, where $n+1$ is even. Then we take a hyperplane such that on both sides of the hyperplane the vertices build a $\frac{n+1}{2}$-simplex parallel to the hyperplane. Now we compress the simplex along the normal vector of this hyperplane. The intersection with the simplex remains the same but the faces of the compressed simplex become smaller. With the formula from Proposition \ref{thm: deformed simplex}  we compute the volumes and show that for $n\geq 5$ the compressed simplex has the desired property for a certain degree of compression.

We make this construction explicit. For $-\frac{1}{n+1} <\delta \leq 0$ let
\[
	T^{-1}:=\left(\begin{array} {cccc|cccc}
			1 +\delta 	& \delta 				& \dots  	& \delta 	& -\delta  	& \dots &\dots & -\delta\\
			   \delta 	& 1 + \delta 		&  				& \vdots &   \vdots  &  &   &    \vdots  \\
					\vdots			&					&		\ddots		&	\delta			&				\vdots		  	& 	& &	 \vdots		\\
				 \delta &\dots &\delta &1+\delta & 								-\delta	&\dots	&\dots &	-\delta\\ \hline
			-\delta & \dots&\dots &-\delta		& 1+\delta	&\delta &\dots &\delta\\
			\vdots & &  &\vdots					&	\delta 	&\ddots &   &\vdots\\
			\vdots  &   &  & \vdots 		& \vdots & &\ddots &\delta\\
			-\delta &\dots  &\dots & -\delta &\delta &\dots &\delta &1+\delta
								
	\end{array}	\right)
\]
The columns the matrix $T^{-1}$ define the vertices of a deformed simplex $S(\delta)$.
Note that $S(0)=S_{\text{reg}}$ and $S\left(-\frac{1}{n+1}\right)$ is  a degenerate simplex which is a $(n-1)$-dimensional set with $n$-dimensional volume equal to $0$.

With $v^{\ast}=\sqrt{\delta}\left(1, \dots,1,-1, \dots,-1\right)$, we may also write $T^{-1}=(\Id + vv^{\ast})$. Using basic matrix theory we know that $T=\Id - \frac{1}{1+v^{\ast}v}v v^{\ast}$ and $\det (T^{-1})= \det{\Id} (1+v^{\ast}v)$. So 
\[
\det(T^{-1})=1 +(n+1)\delta.
\]
Let 
\begin{align*}
a&:=\frac{1}{\sqrt{n+1}}(1,\dots,1,-1,\dots, -1),\\
b&:=\frac{1}{\sqrt{(1+n\delta)^2+n\delta^2}} ( 1+n\delta,-\delta,\dots,-\delta,\delta,\dots,\delta).
\end{align*}
We compute the vectors $\tilde a$ and $\tilde b$ according to Proposition \ref{thm: deformed simplex}. We find
 $\tilde a=a$ and $\tilde b=(1,0,\dots,0)$.
The intersections $H_{\tilde a}\cap S_{\text{reg}}$ resp. $H_a\cap S(\delta)$ are the central sections described at the beginning of the proof. The intersection $H_{\tilde b}\cap S_{\text{reg}}$ is a face of the regular simplex. Therefore $H_b\cap S(\delta)$ is a face of the simplex $S(\delta)$, since $T$ maps faces to faces. 
All faces of the simplex are of this form and therefore have the same volume.

It remains to compute the ratio of the section volumes using the formulas from Theorem \ref{thm:general formula} and analyze the behavior for $\delta \to -\frac{1}{n+1}$. We find
\begin{align*}
\frac{\vol_{n-1}(H_a\cap S(\delta))}{\vol_{n-1}(H_b\cap S(\delta))} \rightarrow \frac{(n+1)(n-1)!}{2((n-1)!!)^2}.
\end{align*}
For odd  $n\geq 5$ this quotient is larger than $1$. 
So for some $\delta \in \left(-\frac{1}{n+1},0\right)$ the simplex $S(\delta)$ has the desired property.
\end{proof}
\section*{Acknowledgements} This work is part of my PhD Thesis. I thank my advisor Hermann K{\"o}nig for his support and advice. 
My research was partly supported by DFG (project \mbox{KO 962/10-1}).


\begin{thebibliography}{10}

\bibitem{Ball1986}
{\sc K.~Ball}, {\em Cube slicing in $\mathbb{R} ^ n$}, Proc. Am. Math. Soc.,
  97 (1986), 465--473.

\bibitem{Ball1989}
{\sc K.~Ball}, {\em Volumes of sections of cubes and related problems}, 
  Lect. Notes Math. 1376, Springer, 1989, 251--260.

\bibitem{Barthe2001}
{\sc F.~{Barthe}}, {\em Extremal properties of central half-spaces for product
  measures}, J. Funct. Anal., 182 (2001), 81--107.

\bibitem{Brzezinski2013}
{\sc P.~{Brzezinski}}, {\em Volume estimates for sections of certain convex
  bodies}, Math. Nachr., 286 (2013), 1726--1743.

\bibitem{Eggleston1963}
{\sc L.~K. E.~Ehrhart, H. G.~Eggleston and H.~G. Eggleston}, {\em Solutions:
  Plane section of a tetrahedron}, The American Mathematical Monthly, 70
  (1963), 1108.

\bibitem{Filliman1992}
{\sc P.~{Filliman}}, {\em The volume of duals and sections of polytopes},
  Mathematika, 39 (1992), 67--80.

\bibitem{Fradelizi1999}
{\sc M.~Fradelizi}, {\em Hyperplane sections of convex bodies in isotropic
  position.}, Beitr. Algebra Geom., 40 (1999), 163--183.

\bibitem{Hensley1979}
{\sc D.~Hensley}, {\em Slicing the cube in $\mathbb{R}^n$ and probability}, Proc. Am. Math. Soc., 73 (1979), 95--100.

\bibitem{Koenig2013d}
{\sc H.~{K\"onig} and A.~{Koldobsky}}, {\em On the maximal measure of sections of
  the $n$-cube}, Geometric analysis, mathematical relativity, and nonlinear partial differential equations, Contemp. Math., vol. 599, Amer. Math. Soc., Providence, RI, 2013, 123--155.

\bibitem{Koldobsky2005}
{\sc A.~Koldobsky}, {\em Fourier analysis in convex geometry}, Providence, RI:
  American Mathematical Society (AMS), 2005.

\bibitem{Meyer1988}
{\sc M.~{Meyer} and A.~{Pajor}}, {\em Sections of the unit ball of $\ell \sp
  n\sb p$}, J. Funct. Anal., 80 (1988), 109--123.

\bibitem{Moody2013}
{\sc J.~{Moody}, C.~{Stone}, D.~{Zach}, and A.~{Zvavitch}}, {\em A remark on
  the extremal non-central sections of the unit cube}, Asymptotic geometric
  analysis. Proceedings of the fall 2010 Fields Institute thematic program, New
  York, NY: Springer; Toronto: The Fields Institute for Research in the
  Mathematical Sciences, 2013, 211--228.

\bibitem{Oleszkiewicz2000}
{\sc K.~Oleszkiewicz and A.~Pe{\l}czy{\'n}ski}, {\em Polydisc slicing in
  $\mathbb{C}^ n$}, Studia Mathematica, 142 (2000), 281--294.

\bibitem{Philip1972}
{\sc J.~{Philip}}, {\em Plane sections of simplices}, Math. Program., 3
  (1972), 312--325.

\bibitem{Walkup1968}
{\sc D.~W. {Walkup}}, {\em A simplex with a large cross section}, Am. Math.
  Mon., 75 (1968), 34--36.

\bibitem{Webb1996}
{\sc S.~Webb}, {\em Central slices of the regular simplex}, Geom. Dedicata, 61
  (1996), 19--28.

\end{thebibliography}
\end{document}